\newtheorem*{cor}{Corollary}
\newtheorem*{lem}{Lemma}
\newtheorem*{thm}{Theorem}
\newtheorem*{prop}{Proposition}
\theoremstyle{definition}
\theoremstyle{definition}
\newtheorem*{rem}{Remark}
\newcommand{\ract}{\triangleleft}
\newcommand{\lact}{\triangleright}
\newcommand{\blact}{{\scriptstyle\blacktriangleright}}
\DeclareMathOperator{\Gr}{Gr}
\DeclareMathOperator{\Top}{\mathsf{top}}
\DeclareMathOperator{\Hom}{Hom}
\DeclareMathOperator{\Ext}{Ext}
\DeclareMathOperator{\Proj}{Proj}
\DeclareMathOperator{\Inj}{Inj}
\DeclareMathOperator{\im}{Im}
\DeclareMathOperator{\cop}{cop}
\numberwithin{equation}{section}
\DeclareMathOperator{\soc}{\mathsf{soc}} 
\DeclareMathOperator{\dsoc}{\mathsf{Soc}}
\DeclareMathOperator{\dtop}{\mathsf{Top}}
\newcommand{\cat}[1]{\mathscr{#1}}
\newcommand{\ds}[1]{\langle #1\rangle}
\begin{document}

\newcommand{\stanch}[2]{\hypertarget{#1:#2}{}}
\newcommand{\stref}[3]{\hyperlink{#2:#1}{#3~\ref{#1}}}
\newcommand{\lemanch}[1]{\stanch{lem}{#1}}
\newcommand{\thmanch}[1]{\stanch{thm}{#1}}
\newcommand{\propanch}[1]{\stanch{prop}{#1}}
\newcommand{\coranch}[1]{\stanch{cor}{#1}}
\newcommand{\remanch}[1]{\stanch{rem}{#1}}
\newcommand{\defanch}[1]{\stanch{def}{#1}}
\newcommand{\thmref}[1]{\stref{#1}{thm}{Theorem}}
\newcommand{\secref}[1]{\hyperref[#1]{Section~\ref{#1}}}
\newcommand{\subsref}[1]{\hyperref[#1]{\S\ref{#1}}}
\newcommand{\lemref}[1]{\stref{#1}{lem}{Lemma}}
\newcommand{\propref}[1]{\stref{#1}{prop}{Proposition}}
\newcommand{\corref}[1]{\stref{#1}{cor}{Corollary}}
\newcommand{\remref}[1]{\stref{#1}{ref}{Remark}}
\newcommand{\defref}[1]{\stref{#1}{def}{Definition}}
\newcommand{\id}{\operatorname{id}}
\newcommand{\sgn}{\operatorname{sgn}}
\newcommand{\wt}{\operatorname{wt}}
\newcommand{\tensor}{\otimes}
\newcommand{\nc}{\newcommand}
\newcommand{\twedge}{{\textstyle\bigwedge}}
\newcommand{\rnc}{\renewcommand}
\newcommand{\dist}{\operatorname{dist}}
\newcommand{\qbinom}[2]{\genfrac[]{0pt}0{#1}{#2}}
\nc{\cal}{\mathcal} \nc{\goth}{\mathfrak} \rnc{\bold}{\mathbf}
\renewcommand{\frak}{\mathfrak}
\newcommand{\supp}{\operatorname{supp}}
\newcommand{\ad}{\operatorname{ad}}
\newcommand{\Maj}{\operatorname{Maj}}
\newcommand{\Ht}{\operatorname{ht}}
\renewcommand{\Bbb}{\mathbb}
\newcommand{\ZZ}{\mathbb Z}
\nc\bomega{{\mbox{\boldmath $\omega$}}} \nc\bpsi{{\mbox{\boldmath
$\Psi$}}}
 \nc\balpha{{\mbox{\boldmath $\alpha$}}}
 \nc\bpi{{\mbox{\boldmath $\pi$}}}

\newcommand{\lie}[1]{\mathfrak{#1}}
\nc{\head}{\operatorname{head}}
\def\im{\operatorname{Im}}
\nc\gr{\operatorname{gr}}
\nc\kk{\Bbbk}

\newgeometry{margin=1.2in}

\title[Koszul duality for semidirect products]{Koszul duality for semidirect products and generalized Takiff algebras}

\author{Jacob Greenstein}
\address{Department of Mathematics, University of California Riverside, CA 92521, USA}
\email{jacob.greenstein@ucr.edu}

\author{Volodymyr Mazorchuk}
\address{Department of Mathematics, Uppsala University, Box 480, SE-751 06, Uppsala, Sweden}
\email{mazor@math.uu.se}

\begin{abstract}
We obtain Koszul-type dualities for categories of graded modules over
a graded associative algebra 
which can be realized as the semidirect product of a bialgebra coinciding with its degree zero part 
and a graded module algebra for the latter. In particular, this applies to 
graded representations of the universal enveloping algebra of the Takiff Lie algebra (or the truncated current algebra) and its (super)analogues, and also 
to semidirect products of quantum groups with braided symmetric and exterior module algebras in case the latter are flat deformations
of classical ones.
\end{abstract}
\maketitle

\vspace*{-1cm}
\section{Introduction}\label{sec1}

Koszul property, as defined in \cite{Pr}, plays an important role in modern representation and structural 
theory of graded associative algebras. It typically occurs in the following setting. 
Let $A$ be a $\ZZ$-graded associative algebra over a field~$\kk$ whose non-zero homogeneous components appear only in 
non-negative degrees and are finite dimensional and whose degree zero part $A_0$ is semisimple. Consider the 
category of locally finite dimensional graded $A$-modules, which can be regarded as a non-semisimple 
deformation of the semisimple category of finite dimensional $A_0$-modules. Koszulity of $A$ is then 
formulated via the requirement that simple graded $A$-modules have so-called {\em linear} projective 
resolutions. One consequence of  Koszulity  is a derived equivalence between the bounded derived 
category of finitely generated graded  $A$-modules and a similar category for the quadratic dual 
of~$A$ (see \cite{BGS}). This classical Koszul duality has numerous generalizations and extensions
(see \cites{Ke,MOS,Ma} and references therein).

However, it often happens that one needs to consider graded modules over a graded algebra whose degree zero 
part is not semisimple. A typical example is the current algebra $\lie g[t]:=\lie g\tensor \mathbb C[t]$ 
of a finite dimensional simple Lie algebra~$\lie g$ over $\mathbb C$ which is intimately connected to,
in particular, the quantum affine algebra corresponding to~$\lie g$. The universal enveloping algebra of $\lie g[t]$ is naturally 
$\ZZ$-graded, with the degree zero part being isomorphic to the enveloping algebra of~$\lie g$. The 
latter is very far from being semisimple. However, the category of finite dimensional $\lie g$-modules 
is semisimple. This allows one to associate a Koszul algebra with the full subcategory of the category 
of finitely generated graded locally finite dimensional $\lie g[t]$-modules whose objects are 
annihilated by the Lie ideal $\lie g\tensor t^2\mathbb C[t]$ of~$\lie g[t]$ (cf.~\cites{CG-1,CG-2}).
Alternatively, such modules can be regarded as modules over the Takiff Lie algebra $\lie g\rtimes \lie g$ (\cite{Tak})
which is naturally isomorphic to $\lie g\tensor\mathbb C[t]/(t^2)$. The interest in that category stems 
from the observation that $q=1$ limits of celebrated Kirillov-Reshetikhin modules over quantum affine algebras (\cite{KR})
and also of certain generalizations of Kirillov-Reshetikhin modules known as minimal affinizations (\cite{Ch}) are
its objects, provided that $\lie g$ is of a classical type. Once one has a 
Koszul algebra, its quadratic dual is also Koszul, and the natural question is whether one can find a 
Lie-theoretic background for  a representation theory of that quadratic dual.

In the present paper we answer that question and establish the most general framework in which such a question 
can be answered. Namely, it turns out that for the Takiff Lie algebra and its generalizations, the 
``Koszul graded dual'' is, morally, a certain Lie superalgebra which is isomorphic to our initial Lie 
algebra as a $\lie g$-module. This is a special case of the following setup. Suppose that we have a 
$\ZZ$-graded algebra $A$ such that $A_0$ is a bialgebra and $A$ is a semidirect product of $A_0$ 
with a $\ZZ$-graded right $A_0$-module algebra~$H$. We consider a category of  
$\ZZ$-graded left $A$-modules whose graded pieces are in a suitable ``underlying'' category~$\cat C$ of $A_0$-modules (for example,
a semisimple category of finite dimensional $A_0$-modules if it is available). 
Assuming that $H$ is Koszul, we establish (see~\thmref{KDS170} and its Corollary) a Koszul-type duality between that category and a category 
of $\ZZ$-graded left modules with graded pieces in~$\cat C$ over the algebra $A^\circledast$, which is the semidirect product of the quadratic 
dual of~$H$ with the bialgebra $A^{\cop}$ which coincides with~$A$ as an algebra and has the opposite comultiplication. 
The case of the Takiff algebra described above corresponds to taking 
$A_0$ to be the enveloping algebra of~$\lie g$, $H$ to be the symmetric algebra 
of the adjoint representation of~$\lie g$ and $\cat C$ to be the category of finite dimensional $\lie g$-modules.

The paper is organized as follows. In~\secref{GEN} we collected basic generalities on 
graded algebras and categories of graded modules. \secref{snew3} contains results pertaining to module algebras and semidirect products that 
are needed for our construction.
In~\secref{snew4} we establish Koszul duality in the setting of semidirect products of bialgebras with 
their module algebras. In particular, it contains the main results of the paper (\thmref{KDS170} and
\corref{KDS190}). Finally, \secref{snew5} provides examples illustrating applications of our main result.

\subsection*{Acknowledgments}
A major part of this research was done during the visit of the first author to Uppsala University
which was supported by the Faculty of Natural Sciences of Uppsala University. Support and hospitality
of Uppsala University are gratefully acknowledged. The paper was completed during the Representation
Theory program at the Institute Mittag-Leffler, whose support and hospitality are deeply appreciated.
The first author is partially supported by the Simons foundation collaboration grant no.~245735.
The second author is partially supported by the Swedish Research Council,
by Knut and Alice Wallenbergs Stiftelse and by the Royal Swedish Academy of Sciences.
We thank the referee for very helpful comments and especially for pointing out an
important inaccuracy in the original version.

\section{Generalities}\label{GEN}

\subsection{} All algebras considered in this paper are unital and over a 
fixed base field $\kk$. If $A$ is an algebra and $M$ is a left (respectively, right) 
$A$-module, then we denote the action of $a\in A$ on~$m\in M$ by $a\lact m$ 
(respectively $m\ract a$). Given a category~$\cat A$, we write $X\in \cat A$ 
as a shortcut for $X$ being an object in~$\cat A$. 

\subsection{} Let $\cat B$ be an additive, $\kk$-linear and idempotent split category.
The {\em projective abelianization} $\overline{\cat B}$ of $\cat B$ is the category
defined as follows:
\begin{itemize}
\item  objects  of $\overline{\cat B}$ are all diagrams of the form
\begin{equation}\label{eqnm1}
\xymatrix{X\ar[rr]^{\alpha}&&Y,}
\end{equation}
where $X,Y\in\cat B$ and $\alpha\in\Hom_{\cat B}(X,Y)$;
\item for objects $X\overset{\alpha}{\longrightarrow}Y$ and 
$X'\overset{\alpha'}{\longrightarrow}Y'$, the set 
$\Hom_{\overline{\cat B}}(X\overset{\alpha}{\longrightarrow}Y,
X'\overset{\alpha'}{\longrightarrow}Y')$ is the quotient of the vector space 
formed by all solid diagrams of the form
\begin{equation}\label{eqnm2}
\vcenter{\xymatrix{
X\ar[rr]^{\alpha}\ar[d]_{f}&&Y\ar[d]^g\ar@{.>}[lld]_{\theta}\\
X'\ar[rr]^{\alpha'}&&Y'
}}
\end{equation}
by the subspace generated by all such solid diagrams for which there exists a morphism
$\theta$, depicted by the dotted arrow, such that $\alpha'\theta=g$.
\end{itemize}

Dually, the {\em injective abelianization} $\underline{\cat B}$ of $\cat B$ is the
category whose objects are diagrams of the form \eqref{eqnm1} and whose corresponding
morphisms are given as quotients of the vector space formed by all solid diagrams of
the form \eqref{eqnm2} by the subspace generated by all such solid diagrams for which
there exists a morphism $\theta$ as depicted by the dotted arrow such that 
$\theta\alpha=f$.
We refer the reader to \cite{Fr} and \cite{MM}*{\S3.1} for more details
on abelianizations.

\subsection{}\label{GEN.5} Let $\cat A$ be an idempotent split exact category. 
Denote by $\Proj(\cat A)$ (respectively, $\Inj(\cat A)$) the strictly full subcategory 
of $\cat A$ consisting of projective (respectively, injective) objects of~$\cat A$. 

The category $\Gr\cat A$ of $\ZZ$-graded objects over~$\cat A$ is an additive category whose objects 
are $\mathsf X=\bigoplus_{i\in\ZZ}\mathsf X_i$, where $\mathsf X_i\in\cat A$, and 
\begin{displaymath}
\Hom_{\Gr\cat A}(\mathsf X,\mathsf Y):=
\prod_{i\in\ZZ} \Hom_{\cat A}(\mathsf X_i,\mathsf Y_i). 
\end{displaymath}
Given $r\in\ZZ$ and $\mathsf X\in\Gr\cat A$, we denote by $\mathsf X\ds{r}$ the object in $\Gr\cat A$ such that 
$(\mathsf X\ds r)_i=\mathsf X_{i+r}$, $i\in\ZZ$. This defines the degree shift endofunctor $\ds{r}$ of~$\Gr\cat A$. We say 
that $\mathsf X\in\Gr \cat A$ is {\em bounded above} (respectively, {\em below}) if $\mathsf X_i=0$ for $i\gg 0$ 
(respectively, $i\ll 0$). The full subcategories of bounded above (respectively, below)
objects in~$\Gr\cat A$ are denoted by $\Gr^-\cat A$ (respectively, $\Gr^+\cat A$).

For~$j\in\ZZ$, define a functor $\Pi_j:\Gr\cat A\to\cat A$ by $\Pi_j(\mathsf X)=\mathsf X_j$ 
for all $\mathsf X\in\Gr\cat A$ and $\Pi_j(f)=f_j$ for all $f=(f_i)_{i\in\ZZ}\in\Hom_{\Gr\cat A}(\mathsf X,\mathsf Y)$ and all 
$\mathsf X,\mathsf Y\in\Gr\cat A$. Clearly, $\Pi_j$ is exact. On the other hand, define a functor 
$\gr_j:\cat A\to \Gr\cat A$ by $\gr_j(X)_i=0$ if~$i\not=j$ and $\gr_j(X)_j=X$, for $X\in\cat A$, 
while $\gr_j(f)_j=f$, $\gr_j(f)_i=0$, $i\not=j$, for all $f\in\Hom_{\cat A}(X,Y)$ and $X,Y\in\cat A$. Then $\gr_j$ is a full exact embedding.
It is immediate that $\Pi_j\circ \gr_j$ is isomorphic to the identify functor on~$\cat A$ for all $j\in\ZZ$. 

We say that a non-zero object~$\mathsf X$ of~$\Gr\cat A$ is concentrated in degree~$i$ provided that 
$\Pi_j(\mathsf X)=0$ unless $j=i$. Clearly, if $\mathsf X$ is concentrated in degree~$j$ then 
$\mathsf X\ds r$ is concentrated in degree~$j-r$, $\Pi_{j+r}=\Pi_{j}\circ\ds{r}$ and 
$\gr_{j+r}=\ds{r}\circ\gr_j$, for all $j,r\in\ZZ$.

\subsection{}\label{GEN.new1}
A pair $(\mathsf X^\bullet,d_{\mathsf X})$ 
where $\mathsf X^\bullet=\bigoplus_{i\in \ZZ}\mathsf X^i$ with $\mathsf X^i\in\cat A$, 
and
$d_{\mathsf X}\in\prod_{j\in\ZZ}\Hom_{\cat A}( \mathsf X^j,\mathsf X^{j+1})$ satisfies
$d_{\mathsf X}\circ d_{\mathsf X}=0$ is called a {\em complex} over~$\cat A$; $d_{\mathsf X}$ is called the differential. 
The object~$\mathsf X^i$, $i\in\ZZ$ is called the component of $(\mathsf X^\bullet,d_{\mathsf X})$ of homological degree~$i$,
and an object $X$ in~$\cat A$ identifies with the complex with zero differential whose only non-zero 
component is $X$ in homological degree~$0$ (the trivial complex of~$X$).
A complex $(\mathsf X^\bullet,d_{\mathsf X})$ is said to be {\em bounded above} 
(respectively,  {\em bounded below}) if $\mathsf X^i=0$ for all $i\gg0$ 
(respectively, $i\ll0$). The homotopy category of bounded below (respectively, bounded above) 
complexes over~$\cat A$ is denoted $\mathcal K^+(\mathscr A)$ (respectively, $\mathcal K^-(\cat A)$). 

If $\mathscr A$ is abelian, 
we denote by $\mathcal D^+(\mathscr A)$ (respectively, 
$\mathcal D^-(\cat A)$) the corresponding bounded below (respectively, bounded above) derived category.


\subsection{}\label{GEN.10}
Let $A=\bigoplus_{i\in \ZZ}A_i$ be a $\ZZ$-graded unital $\kk$-algebra. We will always assume that
$A_i=0$ for all~$i<0$. In particular, this implies that $A_0\cong A/\bigoplus_{i>0} A_i$ as an algebra.
Clearly, each $A_i$ is an $A_0$-bimodule.

Given an $A_0$-bimodule~$V$, denote by $T^i_{A_0}(V)$ the $i$-fold tensor product 
$V\tensor_{A_0}\cdots\tensor_{A_0} V$.  Set $(K_A)_i=0$, for $i<0$, and 
$(K_A)_{i}=A_i$, for $i=0,1$, and define for~$i>1$
\begin{displaymath}
(K_A)_i=\bigcap_{j=0}^{i-2} T^j_{A_0}(A_1)\tensor_{A_0}(\ker m_A|_{A_1\tensor_{A_0}A_1})\tensor_{A_0} T^{i-j-2}_{A_0}(A_1),
\end{displaymath}
where $m_A:A\tensor_{A_0} A\to A$ is the multiplication map (cf.~\cite{BGS}*{\S2.6}). Clearly, all 
$(K_A)_i$, for $i\ge0$, are $A_0$-bimodules. In particular, $A_i\tensor_{A_0}-$, $(K_A)_i\tensor_{A_0}-$,
$\Hom_{A_0}(A_i,-)$ and $\Hom_{A_0}((K_A)_i,-)$ are endofunctors of the category of left $A_0$-modules.

For $i,j\in\ZZ_{\ge 0}$, denote by $m_{A_i,A_j}$ (respectively, $m_{A,A_j}$, $m_{A_i,A}$) the restriction of 
$m_A$ to $A_i\tensor_{A_0}A_j$ (respectively, to $A\tensor_{A_0}A_j$, $A_i\tensor_{A_0}A$). Let 
$K_A=\bigoplus_{i\ge 0} (K_A)_i$. The following Lemma is rather standard (cf.~\cite{BGS}).

\lemanch{GEN.10}
\begin{lem}
The object $K_A\tensor_{A_0} A$ (respectively, $A\tensor_{A_0}K_A$) can be equipped with the structure of 
a bounded above complex of $\ZZ$-graded right (respectively, left) $A$-modules, called the {\em right} 
(respectively, {\em left}) {\em Koszul complex} of~$A$, with 
\begin{displaymath}
(K_A\tensor_{A_0} A)^{-i}{}_j:=(K_A)_i\tensor_{A_0} A_{j-i} \quad
\text{ and }\quad (A\tensor_{A_0} K_A)^{-i}{}_j:=A_{j-i}\tensor_{A_0} (K_A)_i,
\end{displaymath} 
for $i,j\in\ZZ_{\ge 0}$, and with the  differential~$d^A$ (respectively, ${}^A d$) defined  by 
\begin{displaymath}
d^{A}{}_{-i}:=\id_{A_1}^{\tensor(i-1)}\tensor m_{A_1,A}\quad
\text{ and }\quad {}^Ad_{-i}:=m_{A,A_1}\tensor\id_{A_1}^{\tensor(i-1)},\quad\text{ for } i>0.
\end{displaymath}
\end{lem}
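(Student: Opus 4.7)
The plan is to split the claim into four elementary verifications for the right Koszul complex (the left case being strictly parallel): (i) each proposed differential $d^A_{-i}$ actually maps $(K_A)_i\otimes_{A_0}A$ into $(K_A)_{i-1}\otimes_{A_0}A$, so that the object is well-defined; (ii) the composition $d^A\circ d^A$ vanishes; (iii) $d^A$ preserves the internal $\ZZ$-grading; (iv) each $d^A_{-i}$ is a map of right $A$-modules. Boundedness above is automatic since $(K_A)_i=0$ for $i<0$.

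For (i), I would use that $(K_A)_i\subset T^i_{A_0}(A_1)$ is cut out by the conditions indexed by $j=0,\dots,i-2$ demanding that $\id_{A_1}^{\otimes j}\otimes m_{A_1,A_1}\otimes \id_{A_1}^{\otimes (i-j-2)}$ vanish on it. The differential $d^A_{-i}=\id_{A_1}^{\otimes (i-1)}\otimes m_{A_1,A}$ acts as the identity on the first $i-1$ tensor factors of $A_1^{\otimes i}$. Every defining condition of $(K_A)_{i-1}$ is indexed by $j\in\{0,\dots,i-3\}\subset\{0,\dots,i-2\}$ and involves only those first $i-1$ factors, so it is automatically satisfied by elements of $(K_A)_i$. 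Hence $d^A_{-i}$ indeed sends $(K_A)_i\otimes_{A_0}A$ into $(K_A)_{i-1}\otimes_{A_0}A$.

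Step (ii) is what I expect to be the main point. I would write
\begin{equation*}
d^A_{-(i-1)}\circ d^A_{-i} = (\id_{A_1}^{\otimes (i-2)}\otimes m_{A_1,A})\circ (\id_{A_1}^{\otimes (i-1)}\otimes m_{A_1,A}),
\end{equation*}
then apply associativity of $m_A$ on the last three tensor slots to rewrite this as
\begin{equation*}
(\id_{A_1}^{\otimes (i-2)}\otimes m_{A_2,A})\circ (\id_{A_1}^{\otimes (i-2)}\otimes m_{A_1,A_1}\otimes \id_A),
\end{equation*}
which visibly factors through $\id_{A_1}^{\otimes (i-2)}\otimes m_{A_1,A_1}\otimes \id_A$. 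The $j=i-2$ defining condition of $(K_A)_i$ is precisely the vanishing of $\id_{A_1}^{\otimes (i-2)}\otimes m_{A_1,A_1}$ on $(K_A)_i$, so the composition is zero on $(K_A)_i\otimes_{A_0}A$, as desired.

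For (iii) and (iv) the verifications are routine: $m_{A_1,A_k}\colon A_1\otimes_{A_0}A_k\to A_{k+1}$ raises internal degree by $1$ while the homological degree passes from $-i$ to $-(i-1)$, so the total internal degree $j$ is preserved; and $d^A_{-i}$ acts on the first $i$ tensor factors only, hence commutes with the right multiplication of $A$ on the final tensor factor, making it a morphism of graded right $A$-modules. The left Koszul complex $A\otimes_{A_0}K_A$ is handled by the mirror argument, with the roles of first and last tensor factors swapped and associativity applied to the first three slots.
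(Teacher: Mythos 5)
Your proposal is correct and follows essentially the same route as the paper's proof: well-definedness comes from the inclusion $(K_A)_i\subset (K_A)_{i-1}\tensor_{A_0}A_1$ (which you simply unpack factor by factor), and $d^A\circ d^A=0$ comes from associativity of $m_A$ together with the $j=i-2$ defining condition $(K_A)_i\subset T_{A_0}^{i-2}(A_1)\tensor_{A_0}\ker m_{A_1,A_1}$. The remaining points (grading, right $A$-linearity, boundedness, and the mirror argument for the left complex) are handled exactly as in the paper.
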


\begin{proof}
We only prove the statement for $K_A\tensor_{A_0}A$, the proof for $A\tensor_{A_0}K_A$ being similar.
Since $(K_A)_i\subset (K_A)_{i-1}\tensor_{A_0}A_1$, the map $d^A{}_{-i}$ is well-defined, is evidently a 
homomorphism of $\ZZ$-graded right $A$-modules and maps 
$(K_A\tensor_{A_0}A)^{-i}{}_j$ to $(K_A\tensor_{A_0}A)^{-i+1}{}_j$.
It remains to observe that 
\begin{displaymath}
d^A{}_{-i+1}\circ d^A{}_{-i}=\id_{A_1}^{\tensor (i-2)}\tensor m_{A_1,A}(\id_{A_1}\tensor m_{A_1,A})=
\id_{A_1}^{\tensor (i-2)}\tensor m_{A_1,A}(m_{A_1,A_1}\tensor\id_{A}) 
\end{displaymath}
which equals zero on~$(K_A)_i\tensor_{A_0}A$ since 
$(K_A)_i\subset T_{A_0}^{i-2}(A_1)\tensor_{A_0}\ker m_{A_1,A_1}$.
\end{proof}

\subsection{}\label{subsecnew101}
Fix a strictly full, additive, idempotent split subcategory~$\cat C$ of the category of left 
$A_0$-modules. We will always assume that $\cat C$ is contained in a strictly full abelian 
subcategory $\widehat{\cat C}$ of the category of left $A_0$-modules such that the 
following conditions hold for all $i>0$ and for all $X,X'\in\cat C$:
\begin{enumerate}[(I)]
%
\item\label{cnd:C1} $\dim_\kk\Hom_{A_0}(X,X')<\infty$;
\item\label{cnd:C2} $\Ext^i_{\widehat{\cat C}}(X,X')=0$;
\item\label{cnd:C3}
$A_i\tensor_{A_0}X$ and $(K_A)_i\tensor_{A_0} X$ are objects in~$\cat C$;
\item\label{cnd:C4}
$\Hom_{A_0}(A_i,X)$ and $\Hom_{A_0}((K_A)_i, X)$ are objects in~$\cat C$.
\end{enumerate}
In particular, the category~$\cat C$ is exact with all short exact sequences being split. This implies that 
$\cat C$ is abelian if and only if it is semisimple.

Denote by $\Gr_A\cat C$ the subcategory of $\Gr\cat C$ whose objects are graded $A$-modules and morphisms 
are those morphisms in~$\Gr\cat C$ which are also homomorphisms of~$A$-modules. 
Thus, for~$\mathsf M\in\Gr_A\cat C$, we have
\begin{displaymath}
\mathsf M=\bigoplus_{i\in\ZZ} \mathsf M_i,\qquad\mathsf M_i\in\cat C,\qquad A_i \mathsf M_j\subset \mathsf M_{i+j} 
\end{displaymath}
and for objects $\mathsf M$, $\mathsf N$ in~$\Gr_A\cat C$ the space of morphisms from~$\mathsf M$ to~$\mathsf N$ is
\begin{displaymath}
\hom_{A}(\mathsf M,\mathsf N):=\Hom_A(\mathsf M,\mathsf N)\cap \Hom_{\Gr\cat C}(\mathsf M,\mathsf N).
\end{displaymath}
We note that $\Gr_A\cat C$ is neither full nor isomorphism closed in $\Gr\cat C$, in general.
Let $\Gr_A^-\cat C$ (respectively, $\Gr_A^+\cat C$) be the full subcategory of bounded above (respectively,
bounded below) objects in~$\Gr_A\cat C$.  Both $\Gr_A^-\cat C$ and $\Gr_A^+\cat C$ are exact catgories
and hence we may consider projective objects in these categories relative to the corresponding exact 
structures. 

\subsection{}\label{GEN.30}
Given an object $X$ in~$\cat C$, let $\mathsf P^0(X):=A\tensor_{A_0}X$. By~\eqref{cnd:C3}, 
$\mathsf P^0(X)$ is an object in~$\Gr_A^+\cat C$ with $\mathsf P^0(X)_i=A_i\tensor_{A_0}X$, and there is 
a canonical epimorphism 
\begin{displaymath}
\mathsf P^0(X)\twoheadrightarrow\gr_0(X), \quad a\tensor x\mapsto a\lact x. 
\end{displaymath}
Dually, set $\mathsf I^0(X):=\displaystyle\bigoplus_{i\in\ZZ}\Hom_{A_0}(A_i,X)$ which is naturally a left $A$-submodule of 
$\Hom_{A_0}(A,X)$. By \eqref{cnd:C4}, $\mathsf I^0(X)$ is an object  in $\Gr_A^-\cat C$ with 
$\mathsf I^0(X)_{-i}=\Hom_{A_0}(A_i,X)$, and we have a natural monomorphism $\gr_0(X)\to \mathsf I^0(X)$ 
given by the isomorphism $X\cong\Hom_{A_0}(A_0,X)$ of left $A_0$-modules.

\lemanch{GEN.30}
\begin{lem}
\begin{enumerate}[{\rm(a)}]
\item\label{lem:GEN.30.a}
The assignment $X\mapsto \mathsf P^0(X)$, where $X\in\cat C$, defines an additive functor $\mathsf P^0:\cat C\to \Proj(\Gr_A^+\cat C)$.
\item\label{lem:GEN.30.b}
Similarly, the assignment $X\mapsto \mathsf I^0(X)$, where $X\in\cat C$, defines an additive functor $\mathsf I^0:\cat C\to \Inj(\Gr_A^-\cat C)$. 
\end{enumerate}
\label{lem:GEN.30}
\end{lem}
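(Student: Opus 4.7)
The plan is to base both parts on the graded tensor-Hom adjunctions
\begin{displaymath}
\hom_A(\mathsf P^0(X),\mathsf M)\cong\Hom_{A_0}(X,\mathsf M_0)
\quad\text{and}\quad
\hom_A(\mathsf N,\mathsf I^0(X))\cong\Hom_{A_0}(\mathsf N_0,X),
\end{displaymath}
valid for $\mathsf M\in\Gr_A^+\cat C$ and $\mathsf N\in\Gr_A^-\cat C$. These follow from the standard ungraded tensor-Hom adjunctions together with the observation that a degree-preserving $A$-linear map is determined by its degree-zero component, a consequence of $A$ being concentrated in non-negative degrees and of $X$ sitting in degree zero in the source (respectively, target) of the internal Hom.

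Before invoking the adjunctions I would verify that $\mathsf P^0$ and $\mathsf I^0$ are additive functors by setting $\mathsf P^0(f):=\id_A\tensor_{A_0}f$ and $\mathsf I^0(f):=\Hom_{A_0}(\id_A,f)$ on morphisms $f\colon X\to X'$ of $\cat C$; these are evidently graded $A$-module homomorphisms, and additivity is then automatic. Hypotheses~\eqref{cnd:C3} and~\eqref{cnd:C4} ensure that the graded components lie in $\cat C$, while the vanishing of $A_i$ in negative degrees places $\mathsf P^0(X)$ in $\Gr_A^+\cat C$ and $\mathsf I^0(X)$ in $\Gr_A^-\cat C$.

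Projectivity of $\mathsf P^0(X)$ is then immediate: a short exact sequence $0\to\mathsf M'\to\mathsf M\to\mathsf M''\to 0$ in $\Gr_A^+\cat C$ is in particular degreewise exact in $\cat C$, and since all short exact sequences in $\cat C$ split, the degree-zero sequence $0\to\mathsf M'_0\to\mathsf M_0\to\mathsf M''_0\to 0$ is split exact. Hence $\Hom_{A_0}(X,-)$ preserves exactness, which by the first adjunction above is precisely the exactness of $\hom_A(\mathsf P^0(X),-)$. Part~\eqref{lem:GEN.30.b} is strictly dual, using $\Hom_{A_0}(-,X)$ together with the second adjunction. The only genuinely nontrivial step is thus the verification of the two graded adjunctions, which amounts to unwinding how degree preservation restricts the ungraded tensor-Hom adjunction; this is standard given the setup of~\subsref{subsecnew101}, and I do not anticipate a serious obstacle.
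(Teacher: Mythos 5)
Your proposal is correct and follows essentially the same route as the paper: both rest on the adjunction $\hom_A(\mathsf P^0(X),\mathsf Y)\cong\Hom_{A_0}(X,\Pi_0(\mathsf Y))$ (respectively its dual for $\mathsf I^0$) together with the fact that, since all short exact sequences in $\cat C$ split, every object of $\cat C$ is projective and injective. The paper merely packages your degreewise-splitness argument as ``a left (right) adjoint of an exact functor preserves projectives (injectives).''
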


\begin{proof}
Clearly, $X\mapsto \mathsf P^0(X)$ defines a functor $\mathsf P^0:\cat C\to \Gr_A \cat C$ (in fact, 
$\cat C\to \Gr_A^+\cat C$). Let~$X\in\cat C$ and $\mathsf Y\in\Gr_A\cat C$. Since 
\begin{displaymath}
\Hom_{A}( A\tensor_{A_0} X,\mathsf Y)\cong \Hom_{A_0}(X,\Hom_A(A,\mathsf Y))\cong\Hom_{A_0}(X,\mathsf Y), 
\end{displaymath}
we conclude that $\hom_A(\mathsf P^0(X),\mathsf Y)\cong \Hom_{A_0}(X,\Pi_0(\mathsf Y))$, that is, the functor 
$\mathsf P^0$ is left adjoint to~$\Pi_0$. Since a left adjoint of an exact functor maps projectives to projectives
and every object in~$\cat C$ is projective, part~\eqref{lem:GEN.30.a} follows. 

Similarly, $X\mapsto \mathsf I^0(X)$ defines a functor $\mathsf I^0:\cat C\to \Gr_A^-\cat C$. 
Since, for any $\mathsf Y\in\Gr_A\cat C$, we have
\begin{displaymath}
\Hom_A(\mathsf Y,\Hom_{A_0}(A,X))\cong \Hom_{A_0}(A\tensor_A \mathsf Y,X)\cong \Hom_{A_0}(\mathsf Y,X),
\end{displaymath}
we conclude that $\hom_A(\mathsf Y,\mathsf I^0(X))\cong \Hom_{A_0}(\Pi_0(\mathsf Y),X)$. Thus, $\mathsf I^0$ is  
right adjoint to the exact functor~$\Pi$ and hence maps injectives to injectives. Since every object of~$\cat C$ is injective, this proves part~\eqref{lem:GEN.30.b}.
\end{proof}

\subsection{}\label{GEN.70}
Our present goal is to construct  functors 
\begin{displaymath}
\Gr_A^-\cat C\to \mathcal K^+(\Inj(\Gr_A^-\cat C))\quad\text{ and }\quad
\Gr_A^+\cat C\to\mathcal K^-(\Proj(\Gr_A^+\cat C)).
\end{displaymath}

Given an object~$\mathsf X=\bigoplus_{j\in\ZZ}\mathsf X_j$ in~$\Gr_A^-\cat C$, set, 
for all $r\ge 0$,
\begin{equation}\label{eq:I_r defn}
\begin{split}
\mathsf I^r(\mathsf X)&=\bigoplus_{i\in\ZZ}\Hom_{A_0}((K_A)_r\tensor_{A_0} A_i,\mathsf X)\cong
\bigoplus_{i\in\ZZ}\Hom_{A_0}(A_i,\Hom_{A_0}((K_A)_r,\mathsf X))\\
&=\bigoplus_{j\in\ZZ} \mathsf I^0(\Hom_{A_0}((K_A)_r,\Pi_j(\mathsf X))\ds{r-j}, 
\end{split}
\end{equation}
which is an object in $\Inj(\Gr_A^-\cat C)$ by \eqref{cnd:C4} 
and \lemref{GEN.30}\eqref{lem:GEN.30.b}. In particular,
\begin{equation}\label{eq:I_r component}
\mathsf I^r(\mathsf X)_{s}=\bigoplus_{i,j\in\ZZ\,:\, j-i=r+s}
\Hom_{A_0}((K_A)_r\tensor_{A_0} A_i,\mathsf X_j),\qquad s\in\ZZ.
\end{equation}
Define $d_{\mathsf X,r},\rho_{\mathsf X,r}:\mathsf I^r(\mathsf X)\to \mathsf I^{r+1}(\mathsf X)$ by 
\begin{displaymath}
d_{\mathsf X,r}(f)=f\circ d^A{}_{-r-1},\qquad 
\rho_{\mathsf X,r}(f)=\lact_{\mathsf X} \circ 
(\id_{A_1}\tensor f),\quad f\in \mathsf I^r(\mathsf X),
\end{displaymath}
where $\lact_{\mathsf X}:A\tensor_{A_0}\mathsf X\to\mathsf X$ is the action map 
$a\tensor x\mapsto a\lact x$. Note that $\rho_{\mathsf X,r}$ is well-defined since 
$(K_A)_{r+1}\subset A_1\tensor_{A_0} (K_A)_r$.

\lemanch{GEN.70}
\begin{lem}
For all $r\in\ZZ_{\ge 0}$ and $\mathsf X\in\Gr_A\cat C$, the map
$\rho_{\mathsf X,r}$ is a morphism in~$\Gr_A\cat C$ satisfying
$\rho_{\mathsf X,r+1}\circ \rho_{\mathsf X,r}=0$ and $\rho_{\mathsf X,r+1}\circ d_{\mathsf X,r}=d_{\mathsf X,r+1}\circ\rho_{\mathsf X,r}$.
In particular, $(\mathsf I^\bullet(\mathsf X),\partial^+_{\mathsf X})$, where $\partial^+_{\mathsf X,r}:=d_{\mathsf X,r}+(-1)^r \rho_{\mathsf X,r}$, is
an object in $\mathcal K^+(\Inj(\Gr_A^- \cat C))$.
\end{lem}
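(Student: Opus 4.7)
The plan is to prove the three assertions $\rho_{\mathsf X,r}\in\hom_A(\mathsf I^r(\mathsf X),\mathsf I^{r+1}(\mathsf X))$, $\rho_{\mathsf X,r+1}\circ\rho_{\mathsf X,r}=0$, and $\rho_{\mathsf X,r+1}\circ d_{\mathsf X,r}=d_{\mathsf X,r+1}\circ\rho_{\mathsf X,r}$ in turn, then assemble them. Two containments, both immediate from the intersection definition of $K_A$, drive the argument:
\[
(K_A)_{r+2}\subset (\ker m_{A_1,A_1})\otimes_{A_0}T^r_{A_0}(A_1)\qquad\text{and}\qquad (K_A)_{r+2}\subset A_1\otimes_{A_0}(K_A)_r\otimes_{A_0}A_1;
\]
the first is the $j=0$ summand of the defining intersection for $(K_A)_{r+2}$, while the second follows by intersecting only the terms with $1\le j\le r-1$.

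For the first assertion I must check that $\rho_{\mathsf X,r}$ is well defined, homogeneous, and $A$-linear. Well-definedness follows from $(K_A)_{r+1}\subset A_1\otimes_{A_0}(K_A)_r$ together with $A_0$-bilinearity of~$\lact_{\mathsf X}$, which makes $\lact_{\mathsf X}\circ(\id_{A_1}\otimes f)$ a legitimate $A_0$-linear map on $(K_A)_{r+1}\otimes_{A_0}A_i$. Homogeneity is a direct check from \eqref{eq:I_r component}: if $f\in\mathsf I^r(\mathsf X)_s$ sends $(K_A)_r\otimes_{A_0}A_i$ into $\mathsf X_j$ with $j-i=r+s$, then $\rho_{\mathsf X,r}(f)$ sends $(K_A)_{r+1}\otimes_{A_0}A_i$ into $\mathsf X_{j+1}$ and $(j+1)-i=(r+1)+s$. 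Finally, the left $A$-action on $\mathsf I^\bullet(\mathsf X)$ is induced by right multiplication on the $A$-argument of $(K_A)_r\otimes_{A_0}A$, whereas $\rho_{\mathsf X,r}$ operates on the leftmost $A_1$-factor of $(K_A)_{r+1}$; these manipulate disjoint tensorands and so commute.

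For $\rho_{\mathsf X,r+1}\circ\rho_{\mathsf X,r}=0$ I use the first displayed containment: every $k\in(K_A)_{r+2}$ can be written as $\sum a_1\otimes b_1\otimes\ell$ with $\sum a_1b_1=0$, so associativity of $\lact_{\mathsf X}$ gives
\[
\rho_{\mathsf X,r+1}(\rho_{\mathsf X,r}(f))(k\otimes a)=\sum a_1\lact(b_1\lact f(\ell\otimes a))=\sum(a_1b_1)\lact f(\ell\otimes a)=0.
\]
For the commutation $\rho_{\mathsf X,r+1}\circ d_{\mathsf X,r}=d_{\mathsf X,r+1}\circ\rho_{\mathsf X,r}$ I use the second displayed containment and write $k=\sum a_1\otimes\ell\otimes c_1$ with $\ell\in(K_A)_r$. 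Here $\rho$ peels off the leftmost $A_1$-factor to act on~$\mathsf X$ while $d$ multiplies the rightmost $A_1$-factor into the $A$-argument; as these act on disjoint tensorands, both composites unfold by a direct calculation to the common expression $\sum a_1\lact f(\ell\otimes c_1 a)$.

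Combining these observations with $d^2=0$ from \lemref{GEN.10},
\[
\partial^+_{\mathsf X,r+1}\circ\partial^+_{\mathsf X,r}=(-1)^{r}\bigl(d_{\mathsf X,r+1}\rho_{\mathsf X,r}-\rho_{\mathsf X,r+1}d_{\mathsf X,r}\bigr)-\rho_{\mathsf X,r+1}\rho_{\mathsf X,r}=0.
\]
Since each $\mathsf I^r(\mathsf X)$ was already shown to lie in $\Inj(\Gr_A^-\cat C)$ in the paragraph preceding the lemma, and the complex is concentrated in homological degrees $r\ge 0$, it lies in $\mathcal K^+(\Inj(\Gr_A^-\cat C))$. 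The principal hurdle is purely notational: tracking the positions of tensorands and keeping the $A_0$-balancings straight; no conceptually deeper input is required.
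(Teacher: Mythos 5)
Your proof is correct and follows essentially the same route as the paper's: the same containments of $(K_A)_{r+2}$ into $\ker m_{A_1,A_1}\otimes_{A_0}T^r_{A_0}(A_1)$ and into $A_1\otimes_{A_0}(K_A)_r\otimes_{A_0}A_1$ drive the identities $\rho_{\mathsf X,r+1}\circ\rho_{\mathsf X,r}=0$ and $\rho_{\mathsf X,r+1}\circ d_{\mathsf X,r}=d_{\mathsf X,r+1}\circ\rho_{\mathsf X,r}$, the only difference being that you compute with elements in Sweedler-like notation where the paper manipulates compositions of maps such as $\lact_{\mathsf X}\circ(\id_{A_1}\otimes\lact_{\mathsf X})=\lact_{\mathsf X}\circ(m_{A_1,A_1}\otimes\id_{\mathsf X})$. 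You also spell out the final sign computation for $\partial^+_{\mathsf X,r+1}\circ\partial^+_{\mathsf X,r}=0$, which the paper dismisses as obvious.
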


\begin{proof}
Clearly, $\rho_{\mathsf X,r}$ is a homomorphism of $A$-modules and 
\begin{displaymath}
\rho_{\mathsf X,r}(\Hom_{A_0}((K_A)_r\tensor_{A_0}A_i,\mathsf X_j))\subset 
\Hom_{A_0}((K_A)_{r+1}\tensor_{A_0}A_i,\mathsf X_{j+1}), 
\end{displaymath}
hence $\rho_{\mathsf X,r}\in\hom_A(\mathsf I^r(\mathsf X),
\mathsf I^{r+1}(\mathsf X))$ by~\eqref{eq:I_r component}.
Furthermore, for 
\begin{displaymath}
f\in\Hom_{A_0}((K_A)_r\tensor_{A_0}A_i,\mathsf X),
\end{displaymath}
we have 
\begin{displaymath}
\rho_{\mathsf X,r+1}\circ \rho_{\mathsf X,r}(f)=\lact_{\mathsf X}\circ(\id_{A_1}\tensor \lact_{\mathsf X})\circ(\id_{A_1}^{\tensor 2}\tensor f)=
\lact_{\mathsf X}\circ (m_{A_1,A_1}\tensor\id_{\mathsf X})\circ(  \id_{A_1}^{\tensor 2}\tensor f)=0
\end{displaymath}
since $(K_A)_{r+1}\subset \ker m_{A_1,A_1}\tensor_{A_0} T_{A_0}^{r-1}(A_1)$. Finally,
\begin{multline*}
\rho_{\mathsf X,r+1}\circ d_{\mathsf X,r}(f)=\rho_{\mathsf X,r+1}
(f\circ \id_{A_1}^{\tensor(r-1)}\tensor m_{A_1,A})=\lact_{\mathsf X}\circ (\id_{A_1}\tensor 
f\circ \id_{A_1}^{\tensor(r-1)}\tensor m_{A_1,A})\\
=\lact_{\mathsf X}\circ (\id_{A_1}\tensor f)\circ (\id_{A_1}^{\tensor r}\tensor m_{A_1,A})=
d_{\mathsf X,r+1}\circ \rho_{\mathsf X,r}(f).
\end{multline*}
The remaining assertion is obvious.
\end{proof}

\propanch{GEN.10}
\begin{prop}
The assignment $\mathsf X\mapsto (\mathsf I^\bullet(\mathsf X),\partial^+_{\mathsf X})$, 
$\mathsf X\in\Gr_A^-\cat C$ defines a functor 
\begin{displaymath}
\Phi^+:\Gr_A^-\cat C \to \mathcal K^+(\Inj(\Gr_A^-\cat C))
\end{displaymath}
which extends to an endofunctor $\Phi^+$ of $\mathcal K^+(\Inj(\Gr_A^-\cat C))$.
\end{prop}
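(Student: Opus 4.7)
The preceding lemma already shows that for each $\mathsf X\in\Gr_A^-\cat C$ the pair $(\mathsf I^\bullet(\mathsf X),\partial^+_{\mathsf X})$ is a legitimate object of $\mathcal K^+(\Inj(\Gr_A^-\cat C))$, so my plan is to split the proof into two stages: first, define $\Phi^+$ on morphisms of $\Gr_A^-\cat C$ and verify that the result is a functor into the bounded below homotopy category; second, promote this to an endofunctor of $\mathcal K^+(\Inj(\Gr_A^-\cat C))$ via the standard total complex construction.

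For $\phi\in\hom_A(\mathsf X,\mathsf Y)$ I set $\Phi^+(\phi)_r(f)=\phi\circ f$ for $f\in\Hom_{A_0}((K_A)_r\tensor_{A_0} A_i,\mathsf X_j)$. Three points then need checking: (i) that $\Phi^+(\phi)_r$ respects the bigrading recorded in~\eqref{eq:I_r component} and is $A$-linear, which is immediate from $\phi$ being a graded $A$-homomorphism; (ii) commutativity with $d_{\mathsf X,r}$, which is pure associativity of composition; and (iii) commutativity with $\rho_{\mathsf X,r}$, which uses exactly the $A$-linearity relation $\phi\circ\lact_{\mathsf X}=\lact_{\mathsf Y}\circ(\id_{A_1}\tensor\phi)$. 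Items (ii) and (iii) jointly imply that $\Phi^+(\phi)$ intertwines the signed differentials $\partial^+_{\mathsf X}$ and $\partial^+_{\mathsf Y}$; functoriality under composition and identity is then obvious.

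For the extension, given a complex $(\mathsf Y^\bullet,d_{\mathsf Y})$ in $\mathcal K^+(\Inj(\Gr_A^-\cat C))$, I would apply the functor just constructed termwise to obtain a double complex with entries $\mathsf I^p(\mathsf Y^q)$, horizontal differential $\partial^+_{\mathsf Y^q}$ and vertical differential $\Phi^+(d_{\mathsf Y})$, and then take the associated total complex with the usual sign $(-1)^q$. Each total component $\bigoplus_{p+q=n}\mathsf I^p(\mathsf Y^q)$ is a finite direct sum because $p\ge 0$ and $q$ is bounded below, which keeps the output inside $\Inj(\Gr_A^-\cat C)$ and bounded below. The same termwise recipe sends chain maps to chain maps and chain homotopies to chain homotopies, so the construction descends to the homotopy category and agrees with the previously defined $\Phi^+$ on trivial complexes concentrated in homological degree zero.

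The one point requiring care, more of a technical nuisance than a conceptual obstacle, is verifying that the total differential squares to zero once signs are included. This reduces to three relations: $(\partial^+_{\mathsf Y^q})^2=0$ from the preceding lemma, $\Phi^+(d_{\mathsf Y})^2=\Phi^+(d_{\mathsf Y}\circ d_{\mathsf Y})=\Phi^+(0)=0$ from functoriality applied to $d_{\mathsf Y}^2=0$, and the commutation $\Phi^+(d_{\mathsf Y})\circ\partial^+=\partial^+\circ\Phi^+(d_{\mathsf Y})$, which is just the naturality of $\partial^+$ applied to the components of $d_{\mathsf Y}$; together with the sign $(-1)^q$ these combine in the standard way to give the required cancellation.
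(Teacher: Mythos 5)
Your argument is correct and follows essentially the same route as the paper: define $\Phi^+$ on a morphism $\phi$ by postcomposition $f\mapsto\phi\circ f$, check compatibility with the differential, and extend to complexes via the total complex of the resulting double complex, using $r\ge 0$ and boundedness below to see that each total degree is a finite direct sum of injectives. Your additional remarks (the separate checks against $d_{\mathsf X,r}$ and $\rho_{\mathsf X,r}$, the square-zero verification, and the preservation of homotopies) only spell out what the paper leaves as ``it is easy to see.''
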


\begin{proof}
For the first assertion, we only need to check that a morphism $\phi:\mathsf X\to \mathsf Y$ in~$\Gr_A^-\cat C$
gives rise to a morphism of complexes $\boldsymbol\phi:\mathsf I^\bullet(\mathsf X)\to\mathsf I^\bullet(\mathsf Y)$. 
Indeed, $\phi$ clearly induces morphisms $\phi_r:\mathsf I^r(\mathsf X)\to\mathsf I^r(\mathsf Y)$,
$f\mapsto \phi\circ f$, for $f\in\Hom_{A_0}((K_A)_r\tensor A_i,\mathsf X_j)$, and it is easy 
to see that $\partial^+_{\mathsf Y}\boldsymbol\phi=\boldsymbol\phi\partial^+_{\mathsf X}$.
Furthermore, given a bounded below complex $(\mathsf X^\bullet,\delta)$ over $\Gr_A^-\cat C$, 
define $\Phi^+((\mathsf X^\bullet,\delta))$ to be the total complex 
\begin{displaymath}
\Big(\bigoplus_{r+s=p} \mathsf I^r(\mathsf X^s),\sum_{r+s=p}\partial^+_{\mathsf X,r}+(-1)^s\delta_s\Big).
\end{displaymath}
Since $\mathsf X^s=0$ if~$s\ll 0$ and~$\mathsf I^r(\mathsf X^s)=0$ if~$r<0$,
we conclude that the above complex is bounded from below and, in fact, for each~$p$ the 
direct sum is finite. In particular, this implies that every term is an object in
$\Inj(\Gr_A^-\cat C)$. This yields the desired endofunctor on~$\mathcal K^+(\Inj(\Gr_A^-\cat C))$.
\end{proof}

\subsection{} \label{GEN.90}
The functor $\Phi^-:\Gr_A^+\cat C\to \mathcal K^-(\Proj(\Gr_A^+\cat C))$ is constructed similarly. 
Indeed, given~$\mathsf X=\bigoplus_{i\in\ZZ}\mathsf X_i\in\Gr_A^+\cat C$, we set,
for all $r\ge 0$,
\begin{equation}\label{eq:P_r defn}
\mathsf P^{-r}(\mathsf X)=A\tensor_{A_0} (K_A)_r\tensor_{A_0} \mathsf X=\bigoplus_{j\in\ZZ} \mathsf P^0((K_A)_r\tensor_{A_0}\Pi_j(\mathsf X))\ds{-j-r}.
\end{equation}
This implies that $\mathsf P^{-r}(\mathsf X)\in\Proj(\Gr_A^+\cat C)$ and 
\begin{equation}\label{eq:P_r component}
\mathsf P^{-r}(\mathsf X)_s=\bigoplus_{i,j\in\ZZ\,:\, i+j=s-r} A_i\tensor_{A_0} (K_A)_r\tensor_{A_0} \mathsf X_j.
\end{equation}
Define $\partial^-_{\mathsf X,-r}:\mathsf P^{-r}(\mathsf X)\to \mathsf P^{-r+1}(\mathsf X)$ by
\begin{displaymath}
\partial^-_{\mathsf X,-r}=(-1)^r\, {}^A d_{-r}\tensor \id_{\mathsf X}
+\id_A\tensor \id_{A_1}^{\tensor (r-1)}\tensor \lact_{\mathsf X}.
\end{displaymath}
Note that $\partial^-_{\mathsf X}$ is well-defined since $(K_A)_r\subset A_1\tensor_{A_0} (K_A)_{r-1}\cap (K_A)_{r-1}\tensor_{A_0} A_1$.

\lemanch{GEN.90}
\begin{lem}
We have $(\mathsf P^\bullet(\mathsf X),\partial^-_\mathsf X)\in\mathcal K^-(\Proj(\Gr_A^+\cat C))$ 
for all $\mathsf X\in\Gr_A^+\cat C$.
\end{lem}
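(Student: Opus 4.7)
The proof is dual to that of \lemref{GEN.70}. The plan is to verify that each $\partial^-_{\mathsf X,-r}$ is a well-defined morphism in $\Gr_A^+\cat C$ and then that $\partial^-\circ\partial^-=0$; boundedness above is immediate since $\mathsf P^{-r}(\mathsf X)=0$ for $r<0$, and each term already lies in $\Proj(\Gr_A^+\cat C)$ by~\eqref{eq:P_r defn}, \lemref{GEN.30}\eqref{lem:GEN.30.a} and condition~\eqref{cnd:C3}.

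I would split $\partial^-_{\mathsf X,-r}=(-1)^r d_r+\rho_r$, where $d_r:={}^Ad_{-r}\otimes \id_{\mathsf X}$ is well-defined via the leftmost inclusion $(K_A)_r\subset A_1\otimes_{A_0}(K_A)_{r-1}$ used in~\lemref{GEN.10}, and $\rho_r:=\id_A\otimes \id_{A_1}^{\otimes(r-1)}\otimes \lact_{\mathsf X}$ is well-defined via the rightmost inclusion $(K_A)_r\subset (K_A)_{r-1}\otimes_{A_0}A_1$. Since both pieces operate only on the tensor factors to the right of the leftmost copy of $A$, they are morphisms of left $A$-modules, and a short count using~\eqref{eq:P_r component} shows that they preserve the internal grading.

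For $\partial^-\circ\partial^-=0$ I would expand the product into four summands. The term $d_{r-1}d_r$ vanishes because ${}^Ad$ is a differential on the left Koszul complex (\lemref{GEN.10}). After invoking associativity of $\lact_{\mathsf X}$, the term $\rho_{r-1}\rho_r$ equals $\id_A\otimes \id_{A_1}^{\otimes(r-2)}\otimes \bigl(\lact_{\mathsf X}\circ (m_{A_1,A_1}\otimes \id_{\mathsf X})\bigr)$, which is zero on $(K_A)_r$ by the inclusion $(K_A)_r\subset T^{r-2}_{A_0}(A_1)\otimes_{A_0}\ker m_{A_1,A_1}$. The two cross terms $(-1)^{r-1}d_{r-1}\rho_r$ and $(-1)^r\rho_{r-1}d_r$ cancel because $d_{r-1}$ acts on the leftmost $A\otimes A_1$ slot while $\rho_r$ acts on the rightmost $A_1\otimes\mathsf X$ slot, so they commute on the nose and their sign prefactors are opposite. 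The only real obstacle is bookkeeping the two distinct embeddings of $(K_A)_r$ simultaneously and tracking the sign convention, after which the argument runs in complete parallel to the proof of~\lemref{GEN.70}.
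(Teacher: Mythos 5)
Your argument is correct and follows the paper's proof essentially verbatim: the same splitting $\partial^-_{\mathsf X,-r}=(-1)^r\,{}^Ad_{-r}\tensor\id_{\mathsf X}+\lambda_{\mathsf X,-r}$ with $\lambda_{\mathsf X,-r}=\id_A\tensor\id_{A_1}^{\tensor(r-1)}\tensor\lact_{\mathsf X}$, the same two vanishing squares (one from ${}^Ad$ being a differential, one from $(K_A)_r\subset T^{r-2}_{A_0}(A_1)\tensor_{A_0}\ker m_{A_1,A_1}$), and the same sign cancellation of the commuting cross terms. The only quibble is your justification of left $A$-linearity: the piece ${}^Ad_{-r}\tensor\id_{\mathsf X}$ \emph{does} touch the leftmost copy of $A$ via $m_{A,A_1}$; it is nonetheless a morphism of left $A$-modules because left multiplication on $A$ commutes with that right multiplication, not because the map avoids the factor.
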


\begin{proof}
It is obvious that $\partial^-_{\mathsf X,-r}$ is a homomorphism of $\ZZ$-graded $A$-modules. 
Setting $\lambda_{\mathsf X,-r}:=\id_A\tensor \id_{A_1}^{\tensor (r-1)}\tensor \lact_{\mathsf X}$, we obtain
\begin{multline*}
\lambda_{\mathsf X,-r+1}\circ \lambda_{\mathsf X,-r}=
\id_A\tensor\id_{A_1}^{\tensor(r-2)}\tensor(\lact_{\mathsf X}\circ (\id_{A_1}\tensor \lact_{\mathsf X})
=\\=\id_A\tensor\id_{A_1}^{\tensor(r-2)}\tensor\lact_{\mathsf X}\circ (m_{A_1,A_1}\tensor \id_{\mathsf X})=0
\end{multline*}
since $(K_A)_r\subset T_{A_0}^{r-2}(A_1)\tensor\ker m_{A_1,A_1}$. Furthermore, as we clearly have
\begin{displaymath}
({}^A d_{-r+1}\tensor \id_{\mathsf X})\circ \lambda_{\mathsf X,-r}=
\lambda_{\mathsf X,-r+1}\circ ({}^A d_{-r}\tensor \id_{\mathsf X}),
\end{displaymath}
it follows that $\partial^-_{\mathsf X,-r+1}\circ\partial^-_{\mathsf X,-r}=0$.
\end{proof}

The following statement is proved similarly to~\propref{GEN.10}.

\propanch{GEN.90}
\begin{prop}
The assignment $\mathsf X\mapsto (\mathsf P^\bullet(\mathsf X),\partial^-_{\mathsf X})$, 
for $\mathsf X\in\Gr_A^+\cat C$, defines a functor $\Phi^-:\Gr_A^+\cat C
\to \mathcal K^-(\Proj(\Gr_A^+\cat C))$ which further extends to an endofunctor $\Phi^-$
of $\mathcal K^-(\Proj(\Gr_A^+\cat C))$.
\end{prop}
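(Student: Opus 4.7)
The plan is to mirror, with appropriate dualization, the proof of \propref{GEN.10}: projectives replace injectives, bounded-above graded modules replace bounded-below ones, and bounded-above complexes replace bounded-below ones. Everything needed for the core construction is already in \lemref{GEN.90}; the proof reduces to (i) checking that $\mathsf P^{\bullet}(-)$ is functorial in $\mathsf X$ and (ii) totalizing along the complex direction.

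First, I would verify functoriality on morphisms. A morphism $\phi:\mathsf X\to\mathsf Y$ in $\Gr_A^+\cat C$ induces homomorphisms of graded $A$-modules
\begin{equation*}
\phi_{-r}:=\id_A\tensor\id_{(K_A)_r}\tensor\phi\colon\mathsf P^{-r}(\mathsf X)\to\mathsf P^{-r}(\mathsf Y),\qquad r\ge 0.
\end{equation*}
That $\boldsymbol\phi=(\phi_{-r})_{r\ge 0}$ commutes with the ${}^Ad_{-r}\tensor\id_{\mathsf X}$ summand of $\partial^-_{\mathsf X,-r}$ is immediate, since that summand touches only $A\tensor(K_A)_r$. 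Commutation with the action summand $\lambda_{\mathsf X,-r}=\id_A\tensor\id_{A_1}^{\tensor(r-1)}\tensor\lact_{\mathsf X}$ follows from $\phi$ being an $A$-module homomorphism, which gives $\lact_{\mathsf Y}\circ(\id_{A_1}\tensor\phi)=\phi\circ\lact_{\mathsf X}$. Combined with \lemref{GEN.90}, this produces a functor $\Gr_A^+\cat C\to\mathcal K^-(\Proj(\Gr_A^+\cat C))$.

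Second, I would extend to the homotopy category by the standard totalization. Given a bounded-above complex $(\mathsf X^\bullet,\delta)$ over $\Gr_A^+\cat C$, set
\begin{equation*}
\Phi^-((\mathsf X^\bullet,\delta)):=\Big(\bigoplus_{-r+s=p}\mathsf P^{-r}(\mathsf X^s),\ \sum_{-r+s=p}\bigl(\partial^-_{\mathsf X^s,-r}+(-1)^s(\boldsymbol{\delta_s})_{-r}\bigr)\Big)_{p\in\ZZ}.
\end{equation*}
Because $\mathsf X^s=0$ for $s\gg0$ and $\mathsf P^{-r}(-)=0$ for $r<0$, the direct sum in each fixed $p$ ranges over only finitely many pairs $(r,s)$ with $-r+s=p$; moreover $p\le \max\{s:\mathsf X^s\ne 0\}$, so the total complex is bounded above. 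Each term is a finite direct sum of objects of $\Proj(\Gr_A^+\cat C)$ and is therefore itself projective. The resulting assignment respects chain homotopies by the same bookkeeping, so it descends to an endofunctor of $\mathcal K^-(\Proj(\Gr_A^+\cat C))$.

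The only step that requires genuine care is the sign convention in the total differential, and this is the standard issue for any totalization. With the convention $(-1)^s$ attached to $\delta_s$, the square of the total differential decomposes into three kinds of terms: the $\partial^-$--$\partial^-$ terms vanish by \lemref{GEN.90}, the $\delta$--$\delta$ terms vanish because $(\mathsf X^\bullet,\delta)$ is a complex, and the mixed terms cancel because $\partial^-$ is natural in $\mathsf X$ (which is exactly the first-assertion content checked above). Thus no new obstacle arises beyond what was already handled for $\Phi^+$.
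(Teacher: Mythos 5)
Your proposal is correct and takes essentially the same route as the paper, whose entire proof of this statement is the remark that it "is proved similarly to \propref{GEN.10}": you supply exactly that dualization, with the induced maps $\id_A\tensor\id_{(K_A)_r}\tensor\phi$, the appeal to \lemref{GEN.90}, and the same totalization and boundedness argument. One small caveat: with the sign $(-1)^s$ attached to $\boldsymbol{\delta_s}$ (copied verbatim from the paper's convention in \propref{GEN.10}), the mixed terms in the square of the total differential do \emph{not} cancel when $\partial^-$ and $\boldsymbol{\delta_s}$ commute — the sign must alternate with the index in the $\mathsf P$-direction, i.e.\ be $(-1)^r$ — so your closing sentence asserting cancellation "because $\partial^-$ is natural in $\mathsf X$" needs this one-character fix, but since the paper's own formula has the same slip this is not a deviation from its argument.
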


\subsection{}\label{GEN.85}
Given a complex $(\mathsf X^\bullet,d_{\mathsf X})$ over~$\Gr_A\cat C$, its $i$-th homology 
$\mathsf H_i(\mathsf X^\bullet)$ is defined as $\mathsf H_i(\mathsf X^\bullet):=\ker d_{\mathsf X,i}/\im d_{\mathsf X,i-1}$ which, a priori, 
is an object in~$\Gr_A\widehat{\cat C}$ (see \subsref{subsecnew101}). We will need some properties 
of $\mathsf I^\bullet$ and $\mathsf P^\bullet$ which are collected in the following Lemma.

\lemanch{GEN.85}
\begin{lem}
Suppose that $A$ is generated by $A_1$ over~$A_0$. Then we have:
\begin{enumerate}[{\rm(a)}]
\item\label{lem:GEN.85.0}
$\mathsf H_0(\mathsf I^\bullet(\mathsf X))\cong\mathsf X$ (respectively, 
$\mathsf H_0(\mathsf P^\bullet(\mathsf Y))\cong \mathsf Y$)
for any $\mathsf X\in\Gr_A^-\cat C$ (respectively, $\mathsf Y\in\Gr_A^+\cat C$).

Furthermore, if $\mathsf X$ is concentrated at degree zero then
\item\label{lem:GEN.85.a}  
$\mathsf I^r(\mathsf X)=\bigoplus_{\mathsf Z} \mathsf I^0(\mathsf Z)\ds{r}$, i.e. $\mathsf I^\bullet$ is diagonal.
\item\label{lem:GEN.85.c}  
$\mathsf P^{-r}(\mathsf X)=\bigoplus_{\mathsf Z} \mathsf P^0(\mathsf Z)\ds{-r}$, i.e. $\mathsf P^\bullet$ is diagonal.
\item\label{lem:GEN.85.b}
$\mathsf H_r(\mathsf I^\bullet(\mathsf X))_{-r}=0=\mathsf H_{-r}(\mathsf P^\bullet(\mathsf X))_r$, for $r>0$.
\end{enumerate}
\end{lem}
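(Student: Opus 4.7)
The plan is to treat part~(a) directly from the definitions of $\partial^\pm$ and to deduce (b)--(d) by tracking internal gradings in \eqref{eq:I_r defn}--\eqref{eq:P_r component}. For~(a), since $\mathsf I^{-1}(\mathsf X)=0$ and $\mathsf P^{1}(\mathsf Y)=0$, the computation reduces to identifying $\ker\partial^+_{\mathsf X,0}$ and the cokernel of $\partial^-_{\mathsf Y,-1}$. The natural candidates are supplied by the adjunctions of \lemref{GEN.30}: a unit-type morphism $\iota_{\mathsf X}\colon\mathsf X\to\mathsf I^0(\mathsf X)$ sending $x$ to a map of the form $a\mapsto (\pm)^?a\lact x$ (with a sign chosen to match the convention $\partial^+_{\mathsf X,r}=d_{\mathsf X,r}+(-1)^r\rho_{\mathsf X,r}$), and the counit $\epsilon_{\mathsf Y}\colon\mathsf P^0(\mathsf Y)\twoheadrightarrow\mathsf Y$, $a\tensor y\mapsto a\lact y$. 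Associativity of $\lact$ then gives $\im\iota_{\mathsf X}\subseteq\ker\partial^+_{\mathsf X,0}$ and $\im\partial^-_{\mathsf Y,-1}\subseteq\ker\epsilon_{\mathsf Y}$. The reverse inclusions rely on $A$ being generated by~$A_1$ over~$A_0$: the cocycle identity $f(a_1a)=\pm a_1\lact f(a)$ for $a_1\in A_1$ derived from $\partial^+_{\mathsf X,0}(f)=0$, combined with the $A_0$-linearity of~$f$, forces~$f$ to be determined by $f(1)\in\mathsf X$ by induction on the number of $A_1$-factors in a product expression for $a\in A$, so $f\in\im\iota_{\mathsf X}$; dually, every element of $A\tensor_{A_0}\mathsf Y$ is congruent modulo $\im\partial^-_{\mathsf Y,-1}$ to an element of $1\tensor\mathsf Y$, which yields the inverse isomorphism on passage to the quotient.

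Parts (b) and (c) are immediate from \eqref{eq:I_r defn} and \eqref{eq:P_r defn}: if $\mathsf X$ is concentrated in degree zero then $\Pi_j(\mathsf X)=0$ for $j\ne 0$, so each direct sum over~$j$ collapses to a single $j=0$ term, yielding $\mathsf I^r(\mathsf X)=\mathsf I^0(\Hom_{A_0}((K_A)_r,\mathsf X_0))\ds{r}$ and $\mathsf P^{-r}(\mathsf X)=\mathsf P^0((K_A)_r\tensor_{A_0}\mathsf X_0)\ds{-r}$. Additivity of $\mathsf I^0$ and $\mathsf P^0$ together with a decomposition of $\Hom_{A_0}((K_A)_r,\mathsf X_0)$ (respectively $(K_A)_r\tensor_{A_0}\mathsf X_0$) as a sum of objects in~$\cat C$ gives the diagonal form $\bigoplus_{\mathsf Z}\mathsf I^0(\mathsf Z)\ds{r}$ (respectively $\bigoplus_{\mathsf Z}\mathsf P^0(\mathsf Z)\ds{-r}$).

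For~(d), applying \eqref{eq:I_r component} and \eqref{eq:P_r component} with $\mathsf X_j=0$ for $j\ne 0$ leaves only the $j=0$ summand: $\mathsf I^{r-1}(\mathsf X)_{-r}=\Hom_{A_0}((K_A)_{r-1}\tensor_{A_0}A_1,\mathsf X_0)$, $\mathsf I^r(\mathsf X)_{-r}=\Hom_{A_0}((K_A)_r,\mathsf X_0)$, $\mathsf I^{r+1}(\mathsf X)_{-r}=0$, and dually $\mathsf P^{-r-1}(\mathsf X)_r=0$, $\mathsf P^{-r}(\mathsf X)_r=(K_A)_r\tensor_{A_0}\mathsf X_0$, $\mathsf P^{-r+1}(\mathsf X)_r=A_1\tensor_{A_0}(K_A)_{r-1}\tensor_{A_0}\mathsf X_0$. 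The key observation is that on these minimal internal-degree slots the $\rho_{\mathsf X,r-1}$ summand of $\partial^+_{\mathsf X,r-1}$ (respectively, the $\lact_{\mathsf X}$ summand of $\partial^-_{\mathsf X,-r}$) factors through a component whose source or target involves $\mathsf X_1=0$ and thus vanishes. The differential therefore reduces to precomposition with the canonical inclusion $(K_A)_r\hookrightarrow(K_A)_{r-1}\tensor_{A_0}A_1$ in the $\mathsf I^\bullet$ case, and, up to a sign, to $\iota\tensor\id_{\mathsf X_0}$ for the inclusion $\iota\colon(K_A)_r\hookrightarrow A_1\tensor_{A_0}(K_A)_{r-1}$ in the $\mathsf P^\bullet$ case. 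The relevant objects belong to~$\cat C$ by \eqref{cnd:C3} and \eqref{cnd:C4}, and by \eqref{cnd:C2} every short exact sequence in~$\cat C$ splits; this forces both inclusions to be split $A_0$-linear monomorphisms, so the induced maps on $\Hom_{A_0}(-,\mathsf X_0)$ and $-\tensor_{A_0}\mathsf X_0$ are respectively surjective and injective, establishing the desired vanishings. I expect the most delicate step to be the sign bookkeeping in~(a), to guarantee that the adjunction morphisms really land in the prescribed (co)kernels for the chosen sign convention; the rest of the argument is a careful tracking of summands and gradings.
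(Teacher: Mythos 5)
Your argument reproduces the paper's proof in all four parts: part (a) by identifying $\ker\partial^+_{\mathsf X,0}$ (respectively, the cokernel of $\partial^-_{\mathsf Y,-1}$) with $\mathsf X$ (respectively, $A\tensor_A\mathsf Y\cong\mathsf Y$) using that $A$ is generated by $A_1$ over $A_0$; parts (b) and (c) directly from the defining formulas for $\mathsf I^r$ and $\mathsf P^{-r}$; and part (d) by computing the extremal graded components and reducing the differential there to the split inclusions $(K_A)_r\hookrightarrow(K_A)_{r-1}\tensor_{A_0}A_1$ and $(K_A)_r\hookrightarrow A_1\tensor_{A_0}(K_A)_{r-1}$, using injectivity (respectively, projectivity) of $\mathsf X$ in $\cat C$. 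The proposal is correct and essentially identical in approach to the paper's proof, with your explicit observation that the $\rho$-summand of the differential vanishes on these components (because it would land in $\mathsf X_1=0$) being a useful elaboration of a step the paper leaves implicit.
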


\begin{proof}
To prove part~\eqref{lem:GEN.85.0}, note that $\partial^+_{\mathsf X,-1}=0$ and 
$f\in\mathsf I^0(\mathsf X)=\bigoplus_{i\in\ZZ} \Hom_{A_0}(A_i,\mathsf X)$ is 
in~$\ker\partial^+_{\mathsf X,0}$ if and only if $f(a_1 a)=a_1\lact f(a)$ for all $a_1\in A_1$, $a\in A$, and
$i\ge 0$. Since  $A$ is generated by~$A_1$, we have that $\ker\partial^+_{\mathsf X,0}$ is a $\ZZ$-graded 
$A$-submodule of~$\mathsf I^0(\mathsf X)$ and $f\in\ker\partial^+_{\mathsf X,0}$ is uniquely determined by $f(1)$, 
hence the map $\ker \partial^+_{\mathsf X,0}\to\mathsf X$, $f\mapsto f(1)$, is an isomorphism of $\ZZ$-graded 
$A$-modules. Similarly, $\partial^-_{\mathsf X,0}=0$ and the image of $\partial^-_{\mathsf X,-1}$ is generated 
by elements of the form $aa_1\tensor x-a\tensor a_1\lact x$, where $a\in A$, $a_1\in A_1$, and $x\in\mathsf X$. 
Since $A$ is generated by~$A_1$, we have that $\mathsf H_0(P^\bullet(\mathsf X))$ is isomorphic to 
$A\tensor_A \mathsf X\cong \mathsf X$.

Parts~\eqref{lem:GEN.85.a} and~\eqref{lem:GEN.85.c} follow from~\eqref{eq:I_r defn} and~\eqref{eq:P_r defn}, respectively. Further, by~\eqref{eq:I_r component}, we have
\begin{gather*}
\mathsf I^r(\mathsf X)_{-r}=\Hom_{A_0}((K_A)_r,\mathsf X),\quad 
\mathsf I^{r-1}(\mathsf X)_{-r}=\Hom_{A_0}((K_A)_{r-1}\tensor_{A_0}A_1,\mathsf X),\\
\mathsf I^{r+1}(\mathsf X)_{-r}=0. 
\end{gather*}
Thus, for $r>0$ it suffices to prove that $\partial^+_{\mathsf X,r-1}(\mathsf I^{r-1}(\mathsf X)_{-r})=
\mathsf I^r(\mathsf X)_{-r}$. But this is immediate since $(K_A)_r\subset (K_A)_{r-1}\tensor A_1$, the map
$\partial^+_{\mathsf X,r-1}|_{\mathsf I^{r-1}(\mathsf X)_{-r}}$ identifies with the restriction and $\mathsf X$ is injective in~$\mathscr C$. Similarly, by~\eqref{eq:P_r component}
\begin{displaymath}
\mathsf P^{-r-1}(\mathsf X)_r=0,\quad \mathsf P^{-r}(\mathsf X)_{r}=(K_A)_r\tensor_{A_0}\mathsf X,\qquad 
\mathsf P^{-r+1}(\mathsf X)_{r}=A_1\tensor_{A_0}(K_A)_{r-1}\tensor_{A_0}\mathsf X.
\end{displaymath}
Thus, we need to prove that $\partial^-_{\mathsf X,-r}|_{\mathsf P^{-r}(\mathsf X)_r}$ is injective. 
This follows since $\partial^-_{\mathsf X,-r}|_{P_{-r}(\mathsf X)_r}$ is induced by 
the natural inclusion $(K_A)_r\to A_1\tensor_{A_0} (K_A)_{r-1}$ and 
$\mathsf X$ is projective in~$\mathscr C$. This completes the proof of part~\eqref{lem:GEN.85.b}.
\end{proof}

\subsection{}\label{GEN.120}
Given an object~$\mathsf X$ in~$\Gr^-_A\cat C$,
define a complex $(\soc \mathsf I)^\bullet(\mathsf X)$ of left $A_0$-modules, and hence $A$-modules
via the canonical homomorphism of algebras $A\twoheadrightarrow A_0$, by
\begin{displaymath}
(\soc \mathsf I)^r(\mathsf X)=\Hom_{A_0}((K_A)_r,\mathsf X).
\end{displaymath}
Observe that~$\partial^+_{\mathsf X}$ restricts to
$(\soc \mathsf I)^\bullet(\mathsf X)$ since,
given $f\in\Hom_{A_0}((K_A)_r,\mathsf X)$, we have 
\begin{equation*}
\partial^+_{\mathsf X,r}(f)=(-1)^{r+1}\lact_{\mathsf X}\circ(\id_{A_1}\tensor f)\in \Hom_{A_0}((K_A)_{r+1},\mathsf X).
\end{equation*}
We can regard~$(\soc \mathsf I)^\bullet(\mathsf X)$ as a subcomplex of~$\mathsf I^\bullet(\mathsf X)$.
Clearly, this construction is functorial in~$\mathsf X$. 

Dually, define $(\Top \mathsf P)^\bullet(\mathsf X)$ by
\begin{displaymath}
(\Top \mathsf P)^{-r}(\mathsf X)=(K_A)_r\tensor_{A_0}\mathsf X.
\end{displaymath}
Again, the differential $\partial^-_{\mathsf X}$ restricts to a morphism 
$(\Top \mathsf P)^{-r}\to(\Top \mathsf P)^{-r+1}$ which is given by 
$\partial^-_{\mathsf X,-r}=\id_{A_1}^{\tensor r-1}\tensor \lact_{\mathsf X}$.
The canonical epimorphism $A\twoheadrightarrow A_0$ yields a commutative diagram
\begin{displaymath}
\xymatrix{
\mathsf P^{-r}(\mathsf X)\ar@{->>}[r]\ar[d]_{\partial^-_{\mathsf X,-r}}& (\Top \mathsf P)^{-r}(\mathsf X)\ar[d]^{\partial^-_{\mathsf X,-r}}\\
\mathsf P^{-r+1}(\mathsf X)\ar@{->>}[r]& (\Top \mathsf P)^{-r+1}(\mathsf X).}
\end{displaymath}
That is, $(\Top \mathsf P)^\bullet(\mathsf X)$ identifies with a quotient of
$\mathsf P^\bullet(\mathsf X)$. 

\section{Module algebras and semidirect products}\label{snew3}

\subsection{}\label{KDS10}
Let $B$ be a bialgebra over~$\kk$ with the comultiplication~$\Delta_B:B\to B\tensor_\kk B$ and the counit~$\varepsilon_B:B\to \kk$. Henceforth we write $\Delta_B(b)=b_{(1)}\tensor b_{(2)}$ in Sweedler's notation.
Let~$H_B$ be a right $B$-module algebra, that is~$H_B$ is an associative $\kk$-algebra and a right $B$-module, 
while the multiplication map $m_H:H_B\tensor_\kk H_B\to H_B$ is a morphism of right~$B$-modules and~$B$ 
acts on~$1_{H_B}$ by the counit. Thus,  for all $b\in B$ and $h,h'\in H_B$ we have 
\begin{displaymath}
(hh')\ract b=(h\ract b_{(1)})(h'\ract b_{(2)}),\qquad 1_{H_B}\ract b=\varepsilon_B(b)1_{H_B}.
\end{displaymath}
Then we can form the semidirect product 
$B\ltimes H_B$ as follows. As a $\kk$-vector space, $B\ltimes H_B=B\tensor_\kk H_B$, with the multiplication given by
\begin{displaymath}
(b\tensor h)\cdot(b'\tensor h')=bb'_{(1)}\tensor (h\ract b'_{(2)})h',\qquad b,b'\in B,\, h,h'\in H_B.
\end{displaymath}

Let~$M$ be a right $B$-module. Then~$M\tensor_\kk H_B$ acquires a natural structure of a right 
$B\ltimes H_B$-module via
\begin{displaymath}
(m\tensor h)\ract (b\tensor h')=m\ract b_{(1)}\tensor (h\ract b_{(2)})h'.
\end{displaymath}
Suppose that $H_B=\bigoplus_{i\in\ZZ} (H_B)_i$ is a $\ZZ$-graded algebra and that each $(H_B)_i$ is a 
right $B$-submodule of~$H_B$. Then $B\ltimes H_B$ is naturally a $\ZZ$-graded algebra with 
$(B\ltimes H_B)_i=B\tensor_\kk (H_B)_i$ as a $\kk$-vector space. Clearly, $M\tensor_\kk H_B$ is a 
$\ZZ$-graded $B\ltimes H_B$-module with $(M\tensor_\kk H_B)_i=M\tensor (H_B)_i$, where $i\in\ZZ$.

Similarly, if~${}_B H$ is a left $B$-module algebra, we define ${}_B H\rtimes B$ as ${}_B H\tensor_\kk B$ 
with the multiplication defined by
\begin{displaymath}
(h\tensor b)\cdot(h'\tensor b')=h(b_{(1)}\lact h')\tensor b_{(2)}b',\qquad b,b'\in B,\,h,h'\in {}_B H.
\end{displaymath}
Let~$M$ be a left $B$-module. Then~${}_B H\tensor_\kk M$ acquires the natural structure of a left 
${}_B H\rtimes B$-module via
\begin{displaymath}
(h\tensor b)\lact(h'\tensor m)=h(b_{(1)}\lact h')\tensor b_{(2)}\lact m,\qquad h,h'\in {}_B H,\, b\in B,\, m\in M.
\end{displaymath}
Finally, if ${}_B H=\bigoplus_{i\in\ZZ} ({}_B H)_i$ is a $\ZZ$-graded algebra such that each $({}_B H)_i$ is a left 
$B$-submodule of~${}_B H$, then ${}_B H\rtimes B$ has the natural structure of a $\ZZ$-graded algebra given by 
$({}_B H\rtimes B)_i=({}_B H)_i\tensor_\kk B$. Likewise, ${}_B H\tensor_\kk M$ is a graded 
${}_B H\rtimes B$-module with $({}_B H\tensor_\kk M)_i={}_B H_i\tensor_\kk M$.

\subsection{}\label{KDS15}
Suppose now that our graded algebra~$A$ from~\subsref{GEN.10} is isomorphic to~$A_0\ltimes H$ where 
$A_0$ is a bialgebra with the comultiplication~$\Delta$ and the counit~$\varepsilon$ as above and 
$H$ is a $\ZZ$-graded {\em right} $A_0$-module algebra with $H_0=\kk$ and $H_i=0$, $i<0$.
Thus, in particular, $A_i\cong A_0\tensor_\kk H_i$ as an $A_0$-bimodule, where 
\begin{displaymath}
a\lact(a'\tensor h)\ract a''=aa'a''_{(1)}\tensor h\ract a''_{(2)}
\end{displaymath}
for all $a,a',a''\in A_0$, $h\in H$. 
Clearly, we have
\begin{displaymath}
(K_A)_i\cong A_0\tensor_\kk (K_H)_i,\qquad (K_H)_i=\bigcap_{j=0}^{i-2} T^j_\kk(H_1)\tensor_\kk \ker m_{H_1,H_1}\tensor_{\kk} T^{i-j-2}_\kk(H_1)
\end{displaymath}
as $A_0$-bimodules. The action of~$A_0$ can be written explicitly as
\begin{displaymath}
a''\lact (a'\tensor h_{1}\tensor\cdots\tensor h_{r})\ract a=a'' a' a_{(1)}\tensor h_{1}\ract a_{(2)}\tensor \cdots
\tensor h_{r}\ract a_{(r+1)},
\end{displaymath}
where $a,a',a''\in A_0$, $(\Delta\tensor 1^{\tensor r-1})\circ\cdots\circ
(\Delta\tensor 1)\circ\Delta(a)=a_{(1)}\tensor\cdots\tensor a_{(r+1)}$  and, finally, 
$h_1\tensor \cdots\tensor h_r\in (K_H)_r$, with $h_i\in H_1$, in Sweedler-like notation with summation understood. 

\subsection{}\label{KDS20}
Our basic example of this construction is provided by the following generalization of the 
Takiff Lie algebra (cf.~\cite{Tak}) and is motivated by results of~\cite{CG-2}.
Let $\lie g$ be a Lie algebra and~$V$ be a $\lie g$-module. Then we can consider the 
Lie algebra $\lie g\ltimes V$, which is isomorphic to~$\lie g\oplus V$ as a vector space, 
with the Lie bracket defined by
\begin{displaymath}
[(x,v),(x',v')]=([x,x']_{\lie g},xv'-x'v),\qquad x,x'\in\lie g,\, v,v'\in V.
\end{displaymath}
Then $H=S(V)$ is a (right) $U(\lie g)$-module algebra. Let~$A_0=U(\lie g)$. Then it is 
easy to see, using the Poincar\'e-Birkhoff-Witt theorem, that 
$U(\lie g\ltimes V)\cong U(\lie g)\ltimes S(V)$. The classical Takiff Lie algebra is obtained by 
taking $V$ to be the adjoint representation of~$\lie g$.

Similarly, we can consider a generalized {\em Takiff Lie superalgebra} 
$\lie s=\lie s_0\oplus \lie s_1$ with $\lie s_0=\lie g$,
$\lie s_1=V$ and the (super) Lie bracket defined by
\begin{displaymath}
[(x,v),(x',v')]=([x,x']_{\lie g},xv'+x'v),\qquad x,x'\in\lie s_0,\,v,v'\in\lie s_1.
\end{displaymath}
In this case, regarding $\twedge V$ as a left $U(\lie g)$-module, we have
\begin{displaymath}
U(\lie s)\cong \twedge V\rtimes U(\lie g).
\end{displaymath}

\subsection{}\label{KDS25}
Let~$R$ be a unital ring and $K$ a unital subring of its center. Let $V$
be a left $R$-module. Then $V^*=\Hom_K(V,K)$ is naturally a right $R$-module via 
\begin{displaymath}
(f\ract r)(v)=f(r\lact v),\quad \text{ for }\quad f\in V^*, v\in V, r\in R. 
\end{displaymath}
Similarly, if~$W$ is a right $R$-module, $W^*=\Hom_K(W,K)$ is a left $R$-module via 
$(r\lact f)(w)=f(w\ract r)$, where $f\in W^*$, $w\in W$, $r\in R$. The following 
lemma is standard.

\begin{lem}\lemanch{KDS25}
Assume that~$R\cong R^*$ as an $R$-bimodule. Suppose that~$W$ is a 
finitely generated projective left $R$-module and $V$ is an~$R$-bimodule. Then
\begin{displaymath}
(V\tensor_R W)^*\cong W^*\tensor_R V^*
\end{displaymath}
\end{lem}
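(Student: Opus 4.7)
My plan is to chain together three standard isomorphisms, tracking right $R$-module structures throughout, and reduce the problem to the Hom-tensor adjunction plus the defining property of a finitely generated projective module.

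First, I would identify the target. Since $V$ is an $R$-bimodule, $V\tensor_R W$ is a left $R$-module via the left $R$-action on $V$, so $(V\tensor_R W)^*$ is a right $R$-module. On the other side, $V^*$ inherits an $R$-bimodule structure (right from $V$'s left action, left from $V$'s right action), so the tensor $W^*\tensor_R V^*$ is balanced correctly, with its right $R$-action coming from $V^*$.

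Second, I would apply the Hom-tensor adjunction over $K$. Regarding $V\tensor_R W$ as a $K$-module (which makes sense since $K\subseteq Z(R)$), the standard adjunction gives
\begin{equation*}
(V\tensor_R W)^* = \Hom_K(V\tensor_R W, K) \cong \Hom_R(W, \Hom_K(V,K)) = \Hom_R(W, V^*),
\end{equation*}
where on the right $V^*$ is viewed as a left $R$-module via the right $R$-action on $V$. One checks that this is an isomorphism of right $R$-modules, where the right $R$-action on $\Hom_R(W,V^*)$ comes from the remaining right $R$-action on $V^*$.

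Third, since $W$ is a finitely generated projective left $R$-module, there is a natural isomorphism of right $R$-modules
\begin{equation*}
\Hom_R(W, R) \tensor_R V^* \stackrel{\sim}{\longrightarrow} \Hom_R(W, V^*),\qquad \phi\tensor g \mapsto (w\mapsto \phi(w)\lact g),
\end{equation*}
which is immediate for $W=R$, extends by additivity to $W=R^n$, and descends to direct summands. It remains to identify $\Hom_R(W,R)$ with $W^*$ as right $R$-modules, using the hypothesis $R\cong R^*$ as an $R$-bimodule: applying the same Hom-tensor adjunction gives
\begin{equation*}
\Hom_R(W,R) \cong \Hom_R(W, R^*) = \Hom_R(W, \Hom_K(R,K)) \cong \Hom_K(R\tensor_R W, K) \cong W^*.
\end{equation*}
Splicing the three isomorphisms yields the desired identification.

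The main obstacle is purely bookkeeping: keeping track of which side each $R$-action lives on, verifying that the three isomorphisms are compatible with the right $R$-module structures (not just $K$-linear), and using the bimodule hypothesis on $R\cong R^*$ in the step that converts $\Hom_R(W,R)$ into $W^*$ as a right $R$-module — it is the $R$-bimodule compatibility (not merely left or right) that guarantees this last isomorphism is one of right $R$-modules.
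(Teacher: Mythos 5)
Your proposal is correct and follows essentially the same route as the paper's proof: the Hom-tensor adjunction $(V\tensor_R W)^*\cong\Hom_R(W,V^*)$, the natural isomorphism $\Hom_R(W,R)\tensor_R V^*\cong\Hom_R(W,V^*)$ for finitely generated projective $W$, and a second application of the adjunction together with $R\cong R^*$ to identify $\Hom_R(W,R)$ with $W^*$. The extra bookkeeping you supply on the right $R$-module structures is exactly the content the paper leaves implicit.
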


\begin{proof}
We have
\begin{displaymath}
(V\tensor_R W)^*=\Hom_K(V\tensor_R W,K)\cong\Hom_R(W,\Hom_K(V,K))\cong \Hom_R(W,R)\tensor_R V^*
\end{displaymath}
since~$W$ is projective and finitely generated. Since~$R\cong \Hom_K(R,K)$ as an $R$-bimodule,
\begin{equation*}
\Hom_R(W,R)\cong \Hom_R(W,\Hom_K(R,K))\cong \Hom_K(R\tensor_R W,K)=W^*.\qedhere
\end{equation*}
\end{proof}

\subsection{}\label{KDS30}
Assume that~$H_1$ is finite dimensional over~$H_0=\kk$. In particular, this implies that 
$(H_1\tensor_\kk H_1)^*\cong H_1^*\tensor_\kk H_1^*$ where $H_1^*=\Hom_\kk(H_1,\kk)$
(cf. \lemref{KDS25}). Define the quadratic dual~$H^!$ of~$H$ by
\begin{displaymath}
H^!=T_\kk(H_1^*)/\langle \im \mu^*\rangle
\end{displaymath}
where $\mu^*:H_2^*\to H_1^*\tensor_\kk H_1^*$ is
induced by the multiplication map, namely
\begin{displaymath}
\mu^*(\xi)(h\tensor h')=\xi(hh'),\qquad h,h'\in H_1,\,\xi\in H_2^*.
\end{displaymath}
By construction, $H^!$ is $\ZZ$-graded with $H^!{}_i$ being the canonical image of 
$T_\kk^i(H_1^*)$. In particular, $H^!{}_i=0$ if~$i<0$.

\begin{rem}
After \lemref{KDS25}, we can consider a more general situation. Namely, we can 
assume that $H_0\cong H_0^*$ as an $H_0$-bimodule and that $H_1$ is finitely generated 
and projective as an~$H_0$-module. Then $A_0=B\ltimes H_0$ and  we still 
have 
\begin{displaymath}
(H_1\tensor_{H_0}H_1)^*\cong H_1^*\tensor_{H_0} H_1^*.
\end{displaymath}
\end{rem}

Given a bialgebra $B$, denote by $B^{\cop}$ the vector space~$B$ endowed with the same 
multiplication and with the opposite comultiplication.

\lemanch{KDS30}
\begin{lem}
$H^!$ is a graded left $A_0^{\cop}$-module algebra,
that is, $a\lact (\xi\xi')=(a_{(2)}\lact \xi)(a_{(1)}\lact \xi')$.
\end{lem}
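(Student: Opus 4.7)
The plan is to equip the tensor algebra $T_\kk(H_1^*)$ with a graded left $A_0^{\cop}$-module algebra structure and to show that the ideal $\langle\im\mu^*\rangle$ is stable under it, so that this structure descends to $H^!$.

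Endow $H_1^*$ with the standard left $A_0$-action $(a\lact\xi)(h)=\xi(h\ract a)$; because $A_0$ and $A_0^{\cop}$ coincide as algebras, this is at the same time a left $A_0^{\cop}$-action. Extend it uniquely to $T_\kk(H_1^*)$ as a graded left $A_0^{\cop}$-module algebra via
\begin{displaymath}
a\lact(\xi_1\tensor\cdots\tensor\xi_n)=(a_{(n)}\lact\xi_1)\tensor(a_{(n-1)}\lact\xi_2)\tensor\cdots\tensor(a_{(1)}\lact\xi_n),
\end{displaymath}
where $a_{(1)}\tensor\cdots\tensor a_{(n)}$ denotes the iterated comultiplication of $a$ in $A_0$. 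The bialgebra axiom for $A_0$ guarantees that this defines an honest action, and grading is preserved because the action on $H_1^*$ has degree zero.

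The central step is to prove $A_0^{\cop}$-equivariance of $\mu^*:H_2^*\to H_1^*\tensor_\kk H_1^*$, where $H_2^*$ carries the analogous left $A_0$-action $(a\lact\eta)(x)=\eta(x\ract a)$. Pair both sides with $h\tensor h'\in H_1\tensor_\kk H_1$ using the identification from~\lemref{KDS25}, under which $\xi\tensor\xi'\in H_1^*\tensor_\kk H_1^*$ corresponds to the functional $h\tensor h'\mapsto \xi(h')\xi'(h)$ with reversed factor order. The right-module-algebra identity $(hh')\ract a=(h\ract a_{(1)})(h'\ract a_{(2)})$ then makes both $\mu^*(a\lact\eta)(h\tensor h')$ and $(a\lact\mu^*(\eta))(h\tensor h')$ collapse to $\eta((h\ract a_{(1)})(h'\ract a_{(2)}))$. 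The reversal of Sweedler indices that produces the~$\cop$ in the action on $H^!$ is exactly the reversal of factor order built into the pairing of~\lemref{KDS25}.

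Equivariance of $\mu^*$ shows that $\im\mu^*$ is a left $A_0^{\cop}$-submodule of $H_1^*\tensor_\kk H_1^*$. Since $T_\kk(H_1^*)$ is a left $A_0^{\cop}$-module algebra, the two-sided ideal generated by an $A_0^{\cop}$-stable subspace is itself $A_0^{\cop}$-stable: expanding $a\lact(t_1\cdot x\cdot t_2)$ via an iterated $\Delta^{\cop}$ keeps the middle factor inside $\im\mu^*$. The structure therefore descends to $H^!$, yielding the required formula. The only real obstacle is a careful bookkeeping of the pairing convention supplied by~\lemref{KDS25}; it is precisely this bookkeeping that forces the opposite comultiplication in the final statement.
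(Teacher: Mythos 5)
Your proposal is correct and follows essentially the same route as the paper: both arguments rest on the order-reversing identification $(H_1\tensor H_1)^*\cong H_1^*\tensor H_1^*$ from the lemma of \S\ref{KDS25}, use it to verify the $A_0^{\cop}$-module-algebra condition on $T_\kk(H_1^*)$, and then observe that $\im\mu^*$ is a submodule because $m_H$ is a morphism of right $A_0$-modules, so the structure descends to $H^!$. The only cosmetic difference is that you \emph{define} the action by the explicit reversed-Sweedler formula and check compatibility afterwards, while the paper obtains the action as the dual of the right action and reads off the reversed comultiplication from the same pairing computation.
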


\begin{proof}
Using~\subsref{KDS25}, we obtain
\begin{align*}
(a\lact (\xi\tensor \xi'))(h'\tensor h)&=(\xi\tensor \xi')((h'\tensor h)\ract a)=(\xi\tensor \xi')(h'\ract a_{(1)}\tensor h\ract a_{(2)})\\
&=\xi(h\ract a_{(2)})\xi'(h'\ract a_{(1)})=(a_{(2)}\lact\xi)(h)(a_{(1)}\lact \xi')(h')\\
&=(a_{(2)}\lact\xi\tensor a_{(1)}\lact \xi')(h'\tensor h),
\end{align*} 
for all $\xi\in T_\kk^r(H_1^*)$, $\xi'\in T_\kk^s(H_1^*)$, $h\in T_\kk^r(H_1)$ 
and~$h'\in T_\kk^s(H_1)$. This shows that $T_\kk(H_1^*)$ is a left $A_0^{\cop}$-module 
algebra and the action is obviously compatible with the grading. Thus, it remains to check 
that $\im m_H^*\subset H_1^*\tensor_\kk H_1^*$ is a left $A_0^{\cop}$-submodule of 
$H_1^*\tensor_\kk H_1^*$. But this is immediate since~$m_H$ is a morphism of right $A_0$-modules. 
\end{proof}

\subsection{}\label{KDS50}
Assume that $H$ is quadratic, that is, $H$ is a quotient of $T_\kk(H_1)$ by the ideal generated by 
$\ker m_{H_1,H_1}$. Note that $H^!$ is always quadratic. Since $H_1$ is finite dimensional, it follows 
that $H_i$ is finite dimensional for all~$i\ge 0$. Then $(H^!)^!$ naturally identifies with~$H$. Moreover, 
by~\cite{BGS}*{\S2.8} we have $(K_{H})^*_i\cong H^!{}_i$ and $(K_{H^!})_i\cong H_i^*$. By~\lemref{KDS25}, 
there is a natural isomorphism 
\begin{displaymath}
\psi_{i,j}:(T_\kk^j(H_1)\tensor_\kk T_\kk^i(H_1))^*\to 
(T_\kk^i(H_1))^*\tensor_\kk (T_\kk^j(H_1))^*\cong T_\kk^i(H_1^*)\tensor_\kk T_\kk^j(H_1^*) 
\end{displaymath}
which yields an isomorphism 
\begin{displaymath}
\bar\psi_{i,j}:((K_H)_j\tensor_\kk H_i)^*\to (H_i)^*\tensor_\kk ((K_H)_j)^*\to (K_{H^!})_i\tensor_\kk H^!{}_j.
\end{displaymath}

Let $m_{j,i}:T_\kk^j(H_1)\tensor_\kk H_i\to T_\kk^{j-1}(H_1)\tensor_\kk H_{i+1}$ be the map induced by 
the natural isomorphism $T_\kk^j(H_1)\tensor T_\kk^i(H_1)\to T_\kk^{j-1}(H_1)\tensor T_\kk^{i+1}(H_1)$ 
and denote by $\bar m_{j,i}$ its restriction to $(K_H)_j\tensor_\kk H_i$. Since $(K_H)_j\subset 
(K_H)_{j-1}\tensor_\kk H_1$, it follows that $\bar m_{j,i}((K_H)_j\tensor_\kk H_i)\subset (K_H)_{j-1}
\tensor_\kk H_{i+1}$. The corresponding maps for $H^!$ are denoted by $\bar m_{j,i}^!$.

\lemanch{KDS50}
\begin{lem}
We have $\bar\psi_{i-1,j+1}(f\circ\bar m_{j+1,i-1})=\bar m^!_{i,j}\circ\bar\psi_{i,j}(f)$ 
for all $f\in ((K_H)_j\tensor_\kk H_i)^*$, $i> 0$, and $j\ge0$.
\end{lem}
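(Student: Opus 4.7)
The plan is to reduce the identity to a straightforward duality computation at the level of the tensor algebras $T_\kk(H_1)$ and $T_\kk(H_1^*)$, and then descend to $(K_H)_\bullet$ and $H^!_\bullet$ using the definition of the quadratic dual.

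First I would introduce the auxiliary tensor-level map
\begin{equation*}
\hat m_{j+1,i-1}:T_\kk^{j+1}(H_1)\tensor_\kk T_\kk^{i-1}(H_1)\to T_\kk^{j}(H_1)\tensor_\kk T_\kk^{i}(H_1),
\end{equation*}
which is simply the canonical identification of $H_1^{\tensor(i+j)}$ with itself, and note that $\bar m_{j+1,i-1}$ is obtained from $\hat m_{j+1,i-1}$ by restricting the first factor to $(K_H)_{j+1}\subset (K_H)_j\tensor_\kk H_1$ and composing with the quotient $T_\kk^{i}(H_1)\twoheadrightarrow H_{i}$. Its transpose $\hat m_{j+1,i-1}^{*}$ at the dual level is again the analogous reassociation, and a direct unpacking of the isomorphism of \lemref{KDS25} shows that
\begin{equation*}
\psi_{i-1,j+1}\circ \hat m_{j+1,i-1}^{*}=\hat m^{!}_{i,j}\circ \psi_{i,j},
\end{equation*}
where $\hat m^{!}_{i,j}$ is the corresponding tensor-level map for $H_1^{*}$. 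This is essentially the compatibility of the duality pairing $T_\kk^j(H_1)\tensor_\kk T_\kk^i(H_1)\cong T_\kk^{j-1}(H_1)\tensor_\kk T_\kk^{i+1}(H_1)$ with the tensor swap that implements $\psi$.

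Next I would descend to the Koszul pieces. Since $H^!$ is quadratic with relations the orthogonal complement of $\ker m_{H_1,H_1}$, the subspaces $(K_{H^!})_i\subset T_\kk^i(H_1^*)$ and the quotients $H^!_j$ are precisely those on which $\psi$ restricts and factors to give $\bar\psi_{i,j}$. Dually, by construction of $K_H$ we have the inclusions $(K_H)_{j+1}\subset (K_H)_j\tensor_\kk H_1$ needed to see that $\hat m_{j+1,i-1}^{*}$ takes $((K_H)_j\tensor_\kk H_i)^{*}$ into $((K_H)_{j+1}\tensor_\kk H_{i-1})^{*}$ and that $\hat m^{!}_{i,j}$ passes to a well-defined map $(K_{H^!})_i\tensor_\kk H^!_j\to (K_{H^!})_{i-1}\tensor_\kk H^!_{j+1}$; these are exactly $\bar m_{j+1,i-1}$ precomposition and $\bar m^{!}_{i,j}$ respectively. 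The identity in the lemma is then the tensor-level identity above restricted to $((K_H)_j\tensor_\kk H_i)^{*}$ and projected to the $H^!$-quotient, so it follows at once.

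The main obstacle is purely combinatorial bookkeeping: one has to keep track of the order reversal built into the natural iso $T_\kk^n(V)^{*}\cong T_\kk^n(V^{*})$ used inside $\psi_{i,j}$, and of the distinction between the index~$j$ in a tensor product acting from the left versus the right, in order to be certain that the tensor-level identity reduces to exactly the stated formula rather than to a variant with shifted indices or permuted factors. Once the conventions are fixed (as in~\subsref{KDS25} and the definition of $\bar\psi_{i,j}$), the verification is mechanical.
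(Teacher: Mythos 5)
Your argument is correct, and it rests on the same combinatorial core as the paper's proof --- the compatibility of the order-reversing duality $(V_1\tensor_\kk\cdots\tensor_\kk V_n)^*\cong V_n^*\tensor_\kk\cdots\tensor_\kk V_1^*$ with moving one tensor factor across the distinguished cut --- but it organizes the verification differently. The paper works directly on the subquotients: it takes $u\tensor h\in(K_H)_{j+1}\tensor_\kk H_{i-1}$, writes $u=u_1\tensor u_2$ with $u_2\in H_1$ and $\bar\psi_{i,j}(f)=f_3\tensor f_2\tensor f_1$ in Sweedler-like notation (using the identification $H_i^*\cong(K_{H^!})_i\subset(K_{H^!})_{i-1}\tensor_\kk H^!{}_1$), and checks that both sides of the claimed identity evaluate to $f_1(u_1)f_2(u_2)f_3(h)$. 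You instead establish the commuting square at the level of the free tensor algebras, where it is a formal consequence of the duality of \subsref{KDS25} and the reassociation of the cut, and then descend to $(K_H)_\bullet$ and $H^!$. What your route buys is a clean separation between the formal duality statement and the quadratic-dual bookkeeping, and it makes transparent \emph{why} the reassociation on the $H$-side corresponds to $\bar m^!_{i,j}$ on the $H^!$-side: under full reversal of the $i+j$ factors, the factor in position $j+1$ lands in position $i$, so moving it rightward across the cut of type $(j+1,i-1)$ becomes moving its dual rightward across the cut of type $(i,j)$. What it costs is that you must justify that all four maps of the square are induced from the tensor level; this needs exactly the inclusions $(K_H)_{j+1}\subset(K_H)_j\tensor_\kk H_1$ and $(K_{H^!})_i\subset(K_{H^!})_{i-1}\tensor_\kk H^!{}_1$ together with the observation that $(K_H)_j$ enters as a subobject while $H_i$ enters as a quotient (so the two factors descend with opposite variances, matching the quotient $H^!{}_j$ and the subobject $(K_{H^!})_i$ on the dual side). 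You assert these points correctly, and the one computation you leave implicit --- the tensor-level identity itself --- is precisely the order-reversal bookkeeping you flag; writing it out amounts to the same three-factor evaluation $f_1(u_1)f_2(u_2)f_3(h)$ that the paper performs.
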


\begin{proof}
Take $ u\tensor h\in (K_H)_{j+1}\tensor H_{i-1}$. Since $(K_H)_{j+1}\subset (K_H)_j\tensor_\kk H_1$, 
we can write $u=u_1\tensor u_2$ in Sweedler-like notation, where $u_1\in (K_H)_j$, $u_2\in H_1$.
Then we have $\bar m_{j+1,i-1}(u\tensor h)=u_1\tensor u_2 h$ and, for any $f\in ((K_H)_j\tensor_\kk H_i)^*$, 
we have 
\begin{displaymath}
\psi_{i-1,j+1}(f\circ \bar m_{j+1,i-1})(u\tensor h)=f_1(u_1)f_2(u_2 h), 
\end{displaymath}
where 
$\bar\psi_{i,j}(f)=f_2\tensor f_1$ in Sweedler-like notation, with $f_1\in (K_H)_j{}^*$ and $f_2\in 
H_i{}^*$. Furthermore, identifying $H_i{}^*$ with $(K_{H^!})_i\subset (K_{H^!})_{i-1}\tensor 
H^!{}_1\cong H_{i-1}{}^*\tensor H_1{}^*$, we can write $\bar\psi_{i,j}(f)=f_3\tensor f_2\tensor f_1$, 
where $f_3\in H_{i-1}{}^*$, $f_2\in H_1{}^*$ and $f_1\in (K_H)_j{}^*$, whence 
$\psi_{i-1,j+1}(f\circ \bar m_{j+1,i-1})(u\tensor h)=f_1(u_1)f_2(u_2)f_3(h)$.

On the other hand, $\bar m^!_{i,j}$ maps $(K_{H^!})_i\tensor_\kk H^!{}_j\cong H_i{}^*\tensor_\kk (K_H)_j{}^*$ 
to 
\begin{displaymath}
(K_{H^!})_{i-1}\tensor H^!_{j+1}\cong H_{i-1}{}^*\tensor (K_H)_{j+1}{}^*.
\end{displaymath}
Thus, 
\begin{displaymath}
\bar m^!_{i,j}\circ \bar\psi_{i,j}(f)(u\tensor h)=(f_3\tensor f_2\tensor f_1)(u_1\tensor u_2\tensor h)=
f_1(u_1)f_2(u_2)f_3(h). \qedhere
\end{displaymath}
\end{proof}

\subsection{}\label{KDS70}
The algebra~$T_\kk(H_1^*)$ acts naturally on $T_\kk(H_1)$ by right contractions. Namely,
$\xi\in H_1^*$ acts trivially on~$T_\kk^0(H_1)$ and by $\id_{H_1}^{\tensor r-1}\tensor \xi$, for
$\xi\in H_1^*$, on~$T^r_\kk(H_1)$. Since the algebra~$T_\kk(H_1^*)$ is freely generated by $H_1^*$,
this extends to a right action of~$T_\kk(H_1^*)$ which we will denote by $(u,\xi)\mapsto u\triangleleft \xi$,
where $u\in T_\kk(H_1)$, $\xi\in T_\kk(H_1^*)$.

Similarly, the algebra $T_\kk(H_1)$ acts on~$T_\kk(H_1^*)$ by left contractions. Namely,
$H_1$ acts trivially on $T_\kk^0(H_1^*)$ and by 
$\langle h,\cdot\rangle\tensor \id_{H_1^*}{}^{\tensor (r-1)}$, for $h\in H_1$, on $T_\kk^r(H_1^*)$, where 
we have $\langle h,\xi\rangle=\xi(h)$, for $h\in H_1$, $\xi\in H_1^*$. Since $T_\kk(H_1)$ is freely generated by 
$H_1$, this extends to a left $T_\kk(H_1)$-action on~$T_\kk(H_1^*)$.

\lemanch{KDS70}
\begin{lem}
\begin{enumerate}[{\rm(a)}]
\item\label{lem:MA70.a} The right action of~$T_\kk(H_1^*)$ on~$T_\kk(H_1)$ by right contractions induces a 
right action of the algebra~$H^!$ on $K_H:=\bigoplus_{r\in\ZZ}(K_H)_r$.
\item\label{lem:MA70.b} If~$H$ is quadratic, then the left action of $T_\kk(H_1)$ on~$T_\kk(H_1^*)$ by 
left contractions induces a left action of $H$ on~$K_{H^!}$.
\end{enumerate}
\end{lem}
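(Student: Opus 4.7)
My plan for (a) is to verify two facts: the right contraction action of $T_\kk(H_1^*)$ on $T_\kk(H_1)$ preserves the graded subspace $K_H$, and every generator $\mu^*(\zeta)$ of the defining ideal $\langle\im\mu^*\rangle\subset T_\kk(H_1^*)$ annihilates $K_H$; together these force the two-sided ideal to act as zero on $K_H$, so the action descends to $H^!$.

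For stability, I take $u\in(K_H)_r$ and $\xi\in H_1^*$, and observe that $u\triangleleft\xi=(\id_{H_1}^{\tensor r-1}\tensor\xi)(u)$. The $\ker m_{H_1,H_1}$ constraints at consecutive positions $(j,j+1)$ with $0\le j\le r-3$, which characterize $(K_H)_{r-1}$, are unaffected by contracting the last tensor slot, so they pass from $u$ to $u\triangleleft\xi$. Hence $(K_H)_r\triangleleft H_1^*\subset(K_H)_{r-1}$, and iterating yields $K_H\triangleleft T_\kk(H_1^*)\subset K_H$.

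For annihilation I then show $u\triangleleft\mu^*(\zeta)=0$ for $u\in(K_H)_r$ with $r\ge 2$: the element $\mu^*(\zeta)\in H_1^*\tensor_\kk H_1^*$ acts by pairing against the last two tensor factors of $u$, which by the $j=r-2$ slice of the defining intersection lie in $\ker m_{H_1,H_1}$, while $\mu^*(\zeta)$ is by construction the pullback of a functional on $H_2=H_1\tensor_\kk H_1/\ker m_{H_1,H_1}$, hence vanishes on $\ker m_{H_1,H_1}$. Combining with stability, for any generator $\phi=\zeta\cdot\mu^*(\alpha)\cdot\zeta'$ of $\langle\im\mu^*\rangle$ and $u\in K_H$ one has $u\triangleleft\phi=((u\triangleleft\zeta)\triangleleft\mu^*(\alpha))\triangleleft\zeta'=0$, since $u\triangleleft\zeta\in K_H$ and then $\mu^*(\alpha)$ kills it; by linearity the ideal annihilates $K_H$.

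Part (b) follows by a parallel argument for the left contraction action of $T_\kk(H_1)$ on $T_\kk(H_1^*)$. Stability $H_1\triangleright(K_{H^!})_r\subset(K_{H^!})_{r-1}$ is the mirror bookkeeping on the first tensor factor, and $\ker m_{H_1,H_1}\subset T_\kk^2(H_1)$ annihilates $K_{H^!}$ because $(K_{H^!})_r$ lies in $\ker m^!_{H_1^*,H_1^*}\tensor T_\kk^{r-2}(H_1^*)\cong\im\mu^*\tensor T_\kk^{r-2}(H_1^*)$, and $\im\mu^*$ is exactly the annihilator of $\ker m_{H_1,H_1}$ under the natural pairing $T_\kk^2(H_1^*)\tensor T_\kk^2(H_1)\to\kk$. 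Quadraticity of $H$ gives $H\cong T_\kk(H_1)/\langle\ker m_{H_1,H_1}\rangle$ together with the biduality $(H^!)^!\cong H$ recorded in \subsref{KDS50}, so the action descends to $H$. The main subtle point in both parts is matching the orientation of right (resp.~left) contractions with the pairing used to define $\mu^*$; this is reconciled because the condition defining $(K_H)_r$ (resp.~$(K_{H^!})_r$) is imposed simultaneously at every pair of consecutive positions, giving the freedom to evaluate the contraction at either endpoint, and the quadratic ideal is precisely engineered to annihilate such contractions.
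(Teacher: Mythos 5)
Your proof is correct and follows essentially the same route as the paper's (stability of $K_H$ under contraction in the last slot, plus the observation that $\mu^*(\zeta)$ vanishes on $\ker m_{H_1,H_1}$ because it is pulled back from $H_2$), only spelled out in more detail. The orientation point you raise at the end is genuinely resolved not by ``evaluating at either endpoint'' but by the paper's order-reversing identification $(V\tensor_\kk W)^*\cong W^*\tensor_\kk V^*$ from \S\ref{KDS25}, under which the right contraction by the product $\xi\eta$ pairs exactly as $\mu^*$ does; this does not affect the validity of your argument.
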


\begin{proof}
To prove part~\eqref{lem:MA70.a}, observe that we have $(1^{\tensor r-1}\tensor \xi)(u)\in (K_H)_{r-1}$
for all elements $u\in (K_H)_r$. Thus, the algebra $T_\kk(H_1^*)$ acts on~$K_H$. It remains to prove that
the defining ideal of~$H^!$ acts trivially. For this, choose any $\xi\in H_2^*$ and observe that 
$m_H^*(\xi)(h\tensor h')=\xi(hh')$, hence~$m_H^*(\xi)(\ker m_H)=0$. Part~\eqref{lem:MA70.b} is proved similarly.
\end{proof}

\section{Koszul duality for semidirect products}\label{snew4}

\subsection{}\label{KDS90}
Retain the assumptions of~\subsref{KDS30}. Using \lemref{KDS30}, define $A^\circledast:=H^!\rtimes A_0^{\cop}$.
Our aim now is to construct an action of $A^\circledast$ on the complex $(\soc \mathsf I)^\bullet(\mathsf X)$, 
$\mathsf X\in\Gr_A^-\cat C$
where $\cat C$ is a category of left $A_0$-modules satisfying \eqref{cnd:C1}--\eqref{cnd:C4}. First, we would like to identify the left $A=A_0\ltimes H$-module  
\begin{displaymath}
\mathsf I^r(\mathsf X)=\bigoplus_{i\in\ZZ}\Hom_{A_0}((K_A)_r\tensor_{A_0} A_i,\mathsf X)
\end{displaymath}
with a more manageable object.
We have
\begin{displaymath}
(K_A)_r\tensor_{A_0}A_i\cong A_0\tensor_\kk (K_H)_r\tensor_{A_0} A_0\tensor_\kk H_i\cong 
A_0\tensor_\kk (K_H)_r\tensor_\kk H_i
\end{displaymath}
as $A_0$-bimodules, where $A_0$ acts on $A_0\tensor_\kk (K_H)_r\tensor_\kk H_i$ via
\begin{equation}\label{eq:MA90.10}
a\lact (a''\tensor u\tensor h)\ract a'=a a''a'_{(1)}\tensor u\ract a'_{(2)}\tensor h\ract a'_{(3)},\quad 
a,a',a''\in A_0,\, u\in (K_H)_r,\, h\in H_i.
\end{equation}
Then
\begin{displaymath}
\Hom_{A_0}((K_A)_r\tensor_{A_0} A_i,\mathsf X)\cong\Hom_\kk((K_H)_r\tensor_\kk H_i,\mathsf X).
\end{displaymath}
In particular, $(\soc \mathsf I)^\bullet(\mathsf X)$ identifies with 
\begin{displaymath}
\bigoplus_{r\in\ZZ} \Hom_{\kk}((K_H)_r,\mathsf X).
\end{displaymath}

The right action of~$H^!$ on~$K_H$ by contractions introduced in~\subsref{KDS70} yields a left action 
of~$H^!$ on $\Hom_\kk(K_H,\mathsf X)$ which we denote by $\blact$; thus, 
$(\xi\blact f)(u_1\tensor u_2)=\xi(u_2)f(u_1)$, where $u_1\tensor u_2\in K_H$ in Sweedler-like notation, 
with $u_2\in H_1$. Clearly, $(\soc \mathsf I)^\bullet(\mathsf X)$ identifies with a left $H^!$-submodule 
of~$\Hom_\kk(K_H,\mathsf X)$. Consider the natural left action of $A_0^{\cop}$ on $\Hom_\kk(K_H,\mathsf X)$ given by
\begin{displaymath}
(a\blact f)(u)=a_{(1)} f(u\ract a_{(2)}),\qquad f\in\Hom_\kk(K_H,\mathsf X),\, u\in K_H,\, a\in A_0,
\end{displaymath}
where the right action of~$A_0$ on~$K_H$ is defined by~\eqref{eq:MA90.10}.

\begin{lem}
The left actions of~$A_0^{\cop}$ and~$H^!$ on~$\Hom_\kk(K_H,\mathsf X)$ defined above yield a left 
action of~$A^\circledast$ on $\Hom_\kk(K_H,\mathsf X)$. Moreover, $(\soc \mathsf I)^\bullet(\mathsf X)$ 
is a left $A^\circledast$-submodule of~$\Hom_\kk(K_H,\mathsf X)$ and the $A^\circledast$ action  commutes
with the differential~$\partial^+_{\mathsf X}$.
\end{lem}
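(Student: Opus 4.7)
There are three assertions to verify: (i) the formulas for $a\blact(-)$ and $\xi\blact(-)$ together define a left $A^\circledast$-module structure on $\Hom_\kk(K_H,\mathsf X)$; (ii) $(\soc\mathsf I)^\bullet(\mathsf X)$ is a submodule under this action; and (iii) the action commutes with the differential $\partial^+_{\mathsf X}$ (up to the Koszul sign produced by the $(-1)^r$ in its definition).

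For (i), I would first unfold the multiplication in $A^\circledast=H^!\rtimes A_0^{\cop}$. Applying the formula from \subsref{KDS10} to the left $A_0^{\cop}$-module algebra $H^!$ and using that $\Delta_{A_0^{\cop}}(a)=a_{(2)}\tensor a_{(1)}$ in $A_0$-Sweedler notation, one obtains
\[
(\xi\tensor a)(\xi'\tensor a')=\xi\,(a_{(2)}\lact\xi')\tensor a_{(1)}a'.
\]
Hence the compatibility to check is
\[
a\blact(\xi\blact f)=(a_{(2)}\lact\xi)\blact(a_{(1)}\blact f)
\]
for all $a\in A_0$, $\xi\in H^!$, $f\in\Hom_\kk(K_H,\mathsf X)$, and by multiplicativity in $H^!$ it suffices to treat $\xi\in H^!_1=H_1^*$. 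I would evaluate both sides on a typical $u=h_1\tensor\cdots\tensor h_r\in(K_H)_r$, expand the right $A_0$-action on $K_H$ by the iterated coproduct as in \subsref{KDS15}, and invoke the description of $a_{(2)}\lact\xi$ from \lemref{KDS30}. Coassociativity of $\Delta_{A_0}$ then matches the two resulting Sweedler expansions.

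For (ii), the identification $(K_A)_r\cong A_0\tensor_\kk(K_H)_r$ of \subsref{KDS15} gives $(\soc\mathsf I)^r(\mathsf X)\cong\Hom_\kk((K_H)_r,\mathsf X)$. The $A_0^{\cop}$-action preserves each component, and $\xi\in H^!_s$ sends $\Hom_\kk((K_H)_r,\mathsf X)$ into $\Hom_\kk((K_H)_{r+s},\mathsf X)$ because its right contraction on $K_H$ lowers the $K_H$-degree by $s$. Hence $\bigoplus_r\Hom_\kk((K_H)_r,\mathsf X)$ is stable.

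For (iii), the $d_{\mathsf X,r}$-component of $\partial^+$ vanishes on $(\soc\mathsf I)^r(\mathsf X)$ for degree reasons (its image would land in the $A_{-1}$-component), so the differential reduces to a signed multiple of $\lact_{\mathsf X}\circ(\id_{A_1}\tensor f)$. Commutation with $A_0^{\cop}$ is another coassociativity computation: expanding $(1\tensor h)(c_1\tensor 1)=c_{1,(1)}\tensor(h\ract c_{1,(2)})$ in $A=A_0\ltimes H$ and using $\Delta^{(r+1)}(a)=(\Delta\tensor\id^{\tensor r})\Delta^{(r)}(a)$, the two expressions $\partial^+(a\blact f)$ and $a\blact\partial^+(f)$ evaluated on $v=h\tensor v'\in(K_H)_{r+1}$ coincide. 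Commutation with $H^!$ is essentially formal: the differential left-multiplies by $H_1$ on the first tensor factor of $(K_H)_{r+1}$, whereas $\xi\in H^!_1$ contracts $H_1^*$ against the last factor, so the two operations act on disjoint Sweedler positions and commute up to the overall sign produced by the $(-1)^r$ in the definition of $\partial^+$.

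The main obstacle is step (i): the careful bookkeeping across iterated coproducts is the key technical point. Once (i) is settled, (ii) is immediate from the grading and (iii) is a routine if notationally heavy verification.
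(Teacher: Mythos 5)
Your proposal follows essentially the same route as the paper: the cross-relation $a\blact(\xi\blact f)=(a_{(2)}\lact\xi)\blact(a_{(1)}\blact f)$ is checked on generators $\xi\in H_1^*$ by a Sweedler/coassociativity computation against the iterated right coaction on $K_H$; the submodule claim is a degree count (the paper phrases this as local finiteness of the actions, which amounts to your observation that each generator shifts the $K_H$-degree by a bounded amount); and compatibility with $\partial^+_{\mathsf X}$ reduces to the $\rho$-part (the $d$-part vanishes on the socle) together with the semidirect-product relation $(1\tensor h)(a\tensor 1)=a_{(1)}\tensor h\ract a_{(2)}$, exactly as in the paper. All of this is correct and would compile into a complete argument.

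The one place you should not leave a hedge is the sign in step (iii), because the lemma asserts genuine commutation, not commutation ``up to sign.'' Your instinct that the two operations act on disjoint tensor positions and hence commute \emph{before} signs is right, and the crux is that the unsigned operator $\rho$ commutes with the whole $A^\circledast$-action while the sign $(-1)^r$ in $\partial^+_{\mathsf X,r}=d_{\mathsf X,r}+(-1)^r\rho_{\mathsf X,r}$ depends on the homological degree $r$, which $\xi\in H^!_1$ shifts by one. So if you apply the convention of \S\ref{GEN.70} literally on both sides, $\xi\blact(a\blact\partial^+ f)$ and $\partial^+(\xi\blact(a\blact f))$ come out with signs $(-1)^r$ and $(-1)^{r+1}$ respectively, i.e.\ odd-degree elements of $H^!$ \emph{anti}commute with $\partial^+$. (The paper's own displayed computation writes $(-1)^r$ on both sides, which silently uses two different degree conventions; note also the discrepancy between the $(-1)^r$ of \S\ref{GEN.70} and the $(-1)^{r+1}$ of \S\ref{GEN.120}.) To obtain the statement as written you must normalize the signs away, e.g.\ by rescaling the identification of $(\soc\mathsf I)^r(\mathsf X)$ with $\Hom_\kk((K_H)_r,\mathsf X)$ by $(-1)^{r(r+1)/2}$, or equivalently by twisting the action of $H^!_s$ on the degree-$r$ component by $(-1)^{rs}$; either change produces an isomorphic complex and makes the differential strictly $A^\circledast$-linear, which is what the subsequent identification of $(\soc\mathsf I)^\bullet(\mathsf X)$ with a complex of projective $A^\circledast$-modules in \propref{KDS130} actually requires. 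Make that normalization explicit and your argument is complete.
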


\begin{proof}
To prove the first statement, it suffices to show that 
\begin{displaymath}
a\blact (\xi\blact f)=(a_{(2)}\lact\xi)\blact (a_{(1)} \blact f),
\end{displaymath}
for all~$f\in\Hom_{\kk}(K_H,\mathsf X)$,  $a\in A_0$ and $\xi\in H_1^*$. 

Take~$u\in K_H$ and write $u=u_1\tensor u_2\in K_H$ in Sweedler-like notation, with~$u_2\in H_1$. Then we have 
\begin{align*}
(a\blact (\xi\blact f))(u)&=a_{(1)}(\xi\blact f)(u_1\ract a_{(2)}\tensor u_2\ract a_{(3)})
=\xi(u_2\ract a_{(3)}) a_{(1)} f(u_1\ract a_{(2)})\\&=(a_{(3)}\lact\xi)(u_2) a_{(1)} f(u_1\ract a_{(2)})
=(a_{(2)}\lact \xi)(u_2) (a_{(1)}\blact f)(u_1)\\
&=((a_{(2)}\lact \xi)\blact (a_{(1)}\blact f))(u).
\end{align*}
Since all actions involved are locally finite, it follows that~$(\soc \mathsf I)^\bullet(\mathsf X)$ is a left $A^{\circledast}$-submodule of~$\Hom_\kk(K_H,\mathsf X)$.
For the second assertion, we need to show that 
\begin{displaymath}
\xi\blact (a\blact \partial^+_{\mathsf X}(f))=\partial^+_{\mathsf X} ( \xi\blact (a\blact f)),\qquad f\in \Hom_{\kk}((K_H)_r,\mathsf X),\,\xi\in H_1^*,\,a\in A_0.
\end{displaymath}
Let~$u=u_1\tensor u_2\tensor u_3\in (K_H)_{r+1}$, $r\ge 1$, where $u_1, u_3\in H_1$ and $u_2\in (K_H)_{r-1}$. Then 
\begin{align*}
(\xi\blact (a\blact \partial^+_{\mathsf X}(f)))(u)&=\xi(u_3) a_{(1)}\partial^+_{\mathsf X}(f)(u_1\ract a_{(2)}
\tensor u_2\ract a_{(3)})\\&=(-1)^{r}
\xi(u_3)a_{(1)}(u_1\ract a_{(2)})f(u_2\ract a_{(3)})\\
&=(-1)^{r} \xi(u_3) u_1 a_{(1)} f(u_2\ract a_{(2)})\\
&=
(-1)^{r} u_1 (\xi\blact (a\blact f))(u_2\tensor u_3)\\
&=\partial^+_{\mathsf X} ( \xi\blact (a\blact f))(u).
\end{align*}
The claim follows.
\end{proof}

\subsection{}\label{KDS130}
We have the following isomorphisms of $\kk$-vector spaces 
\begin{displaymath}
(\soc \mathsf I)^\bullet(\mathsf X)=\bigoplus_{r\in\ZZ}\Hom_\kk((K_H)_r,\mathsf X)\cong \bigoplus_{r\in\ZZ}(K_H)_r^*\tensor_\kk \mathsf X\cong H^!\tensor_\kk \mathsf X,
\end{displaymath}
where we used~\cite{BGS}*{\S 2.8} for the last identification. The canonical isomorphism 
\begin{displaymath}
\Theta:(K_H)^*_r\tensor_\kk \mathsf X\to \Hom_\kk((K_H)_r,\mathsf X)
\end{displaymath}
is given by
$\xi\tensor x\mapsto \Theta_{\xi\tensor x}$, where we have $\Theta_{\xi\tensor x}(u)=\xi(u)x$, $x\in\mathsf X$, 
$\xi\in (K_H)_r^*$ and $u\in (K_H)_r$.

Following \cite{BGS}*{Section~2.12} and \cite{MOS}*{Section~2.4},
denote by $\mathcal K^{\uparrow}(\Inj(\Gr_{A}^-\cat C))$ the full subcategory of 
$\mathcal K^+(\Inj(\Gr_{A}^-\cat C))$ which consists of all complexes $(\mathsf X^{\bullet},d)$ 
satisfying ${\mathsf X^j}_i=0$ for all $j+i\gg 0$. Similalrly, denote by 
$\mathcal K^{\downarrow}(\Proj(\Gr_{A^\circledast}^+\cat C))$ the full subcategory of 
$\mathcal K^-(\Proj(\Gr_{A^\circledast}^+\cat C))$ which consists of all complexes 
$(\mathsf Y^{\bullet},\partial)$  satisfying ${\mathsf Y^j}_i=0$ for all $j+i\ll 0$.
We note that our notation is consistent with that in
\cite{MOS}*{Section~2.4} and is opposite to that in \cite{BGS}*{Section~2.12}.

\propanch{KDS130}
\begin{prop}
For any $\mathsf X\in\Gr_A^-\cat C$, the complex $(\soc \mathsf I)^\bullet(\mathsf X)$ identifies with the complex 
$(Y^\bullet,\partial^+_{\mathsf X})\in\mathcal K^{\downarrow}(\Proj(\Gr_{A^\circledast}^+\cat C))$, where 
\begin{displaymath}
\mathsf Y^j=(A^{\circledast}\tensor_{A_0}\Pi_j(\mathsf X))\ds{-j}.
\end{displaymath}
In particular, $(\soc \mathsf I)^\bullet$ extends to a functor 
\begin{displaymath}
(\dsoc^+ \mathsf I)^\bullet:\mathcal 
K^{\uparrow}(\Inj(\Gr_{A}^-\cat C))\to \mathcal K^{\downarrow}(\Proj(\Gr_{A^\circledast}^+\cat C)).
\end{displaymath}
\end{prop}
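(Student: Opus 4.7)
The plan is to combine the isomorphism~$\Theta$ with the $A^\circledast$-action on $(\soc\mathsf I)^\bullet(\mathsf X)$ constructed in the lemma of~\subsref{KDS90}. First I would identify, for each cohomological degree~$r$,
\[
(\soc\mathsf I)^r(\mathsf X)\cong\Hom_\kk((K_H)_r,\mathsf X)\cong (K_H)_r^*\tensor_\kk\mathsf X\cong H^!_r\tensor_\kk\mathsf X,
\]
using $(K_H)_r^*\cong H^!_r$ from~\cite{BGS}*{\S2.8}. Decomposing $\mathsf X=\bigoplus_j \Pi_j(\mathsf X)$ then reorganizes the total object $\bigoplus_r(\soc\mathsf I)^r(\mathsf X)$ as $\bigoplus_j\bigl(H^!\tensor_\kk\Pi_j(\mathsf X)\bigr)$.

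Next, for each~$j$, I would verify that $H^!\tensor_\kk\Pi_j(\mathsf X)$, endowed with the $A^\circledast$-action inherited from the preceding lemma, is naturally isomorphic as a graded left $A^\circledast$-module to $A^\circledast\tensor_{A_0}\Pi_j(\mathsf X)$. Since $A^\circledast=H^!\rtimes A_0^{\cop}$ has underlying $\kk$-vector space $H^!\tensor_\kk A_0$, the canonical $\kk$-linear map $A^\circledast\tensor_{A_0}\Pi_j(\mathsf X)\to H^!\tensor_\kk\Pi_j(\mathsf X)$ is a bijection, and $A^\circledast$-equivariance is immediate from the definitions of~$\lact$ and~$\blact$ given in~\subsref{KDS90}. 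The shift~$\ds{-j}$ is forced by the fact that $\Pi_j(\mathsf X)$ sits in internal degree~$j$ of~$\mathsf X$ under the original grading, so that the grading on~$\mathsf Y^j$ matches the grading inherited on $H^!\tensor_\kk\Pi_j(\mathsf X)$ from its embedding in~$H^!\tensor_\kk\mathsf X$. Projectivity of $\mathsf Y^j$ in $\Gr^+_{A^\circledast}\cat C$ then follows from the analogue of~\lemref{GEN.30}\eqref{lem:GEN.30.a} applied to~$A^\circledast$ in place of~$A$.

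The preceding lemma moreover tells us that $\partial^+_{\mathsf X}$ commutes with the $A^\circledast$-action, so $(\mathsf Y^\bullet,\partial^+_{\mathsf X})$ is a complex of projective graded $A^\circledast$-modules. Boundedness of $\mathsf X$ from above yields $\Pi_j(\mathsf X)=0$ for $j\gg 0$, whence $\mathsf Y^j=0$ for $j\gg 0$ and the complex lies in~$\mathcal K^-$. The refined~$\mathcal K^\downarrow$ condition $(\mathsf Y^j)_i=0$ for $i+j\ll 0$ should translate, under the matching of bigradings, into the~$\mathcal K^\uparrow$ condition on $(\soc\mathsf I)^\bullet(\mathsf X)$, which holds because $\mathsf X\in\Gr_A^-\cat C$. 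The extension to a functor $(\dsoc^+\mathsf I)^\bullet:\mathcal K^\uparrow(\Inj(\Gr_A^-\cat C))\to\mathcal K^\downarrow(\Proj(\Gr_{A^\circledast}^+\cat C))$ is then obtained by the total-complex construction, exactly as in the proof of~\propref{GEN.10}.

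The main delicacy I expect is the precise bookkeeping of the two bigradings---the pair (cohomological~$r$, internal degree) on~$(\soc\mathsf I)^\bullet(\mathsf X)$ versus the pair (cohomological~$j$, internal degree of~$\mathsf Y^j$)---since it is this matching that simultaneously dictates the specific shift~$\ds{-j}$ and ensures the~$\mathcal K^\downarrow$-boundedness inherited from the~$\mathcal K^\uparrow$-boundedness of the source.
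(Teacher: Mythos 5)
Your proposal is correct and follows essentially the same route as the paper: identify $(\soc\mathsf I)^\bullet(\mathsf X)$ with $H^!\tensor_\kk\mathsf X\cong A^\circledast\tensor_{A_0}\mathsf X$ via $\Theta$ and $(K_H)_r^*\cong H^!{}_r$, check that this intertwines the $\blact$-action with the standard $A^\circledast$-module structure, and then read off the decomposition into the shifted projectives $\mathsf Y^j$ together with the boundedness and the total-complex extension. The only point you leave implicit that the paper writes out is the short computation verifying $\Theta_{a\lact(\xi\tensor x)}=a\blact\Theta_{\xi\tensor x}$ and $\Theta_{\xi'\lact(\xi\tensor x)}=\xi'\blact\Theta_{\xi\tensor x}$, but you correctly identify this as the step to be checked and it is indeed immediate from the definitions.
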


\begin{proof}
The isomorphisms of $\kk$-vector spaces
\begin{displaymath}
A^{\circledast}\tensor_{A_0}\mathsf X\cong H^!\tensor_\kk A_0\tensor_{A_0}\mathsf X\cong H^!\tensor_\kk \mathsf X
\end{displaymath}
induce a left $A^{\circledast}$-module structure on~$H^!\tensor_\kk \mathsf X$ 
(cf.~\subsref{KDS10}) which is given by
\begin{displaymath}
a\lact (\xi\tensor x)=(a_{(2)}\lact \xi)\tensor a_{(1)}\lact x,\qquad \xi'\lact (\xi\tensor x)=\xi'\xi\tensor x,
\end{displaymath}
for all $a\in A_0$, $\xi,\xi'\in H^!$ and~$x\in\mathsf X$. Using this and the identification 
$(K_H)_r^*\cong H^!_r$ we obtain 
\begin{align*}
\Theta_{a\lact (\xi\tensor x)}(u)&=\Theta_{a_{(2)}\lact \xi\tensor a_{(1)}\lact x}(u)=
(a_{(2)}\lact \xi)(u)a_{(1)}\lact x \\
&=\xi(u\ract a_{(2)})a_{(1)}\lact x=(a\blact \Theta_{\xi\tensor x})(u),\\
\intertext{while}
\Theta_{\xi'\lact(\xi\tensor x)}(u)&=\Theta_{\xi'\xi\tensor x}(u)=\xi(u_1)\xi'(u_2)x=
(\xi'\blact \Theta_{\xi\tensor x})(u)
\end{align*}
for all $a\in A_0$, $\xi,\xi'\in H^!$, $u=u_1\tensor u_2\in K_H$ and~$x\in\mathsf X$.

Thus, we have $(\soc I)^\bullet(\mathsf X)\cong \bigoplus_{r,j\in\ZZ} A^\circledast_r\tensor_{A_0} \mathsf X_j$ 
as an $A^\circledast$-module. Set  
\begin{displaymath}
\mathsf Y^j=(A^\circledast\tensor_{A_0} \Pi_j(\mathsf X))\ds{-j}.
\end{displaymath}
Then $(\soc I)^\bullet(\mathsf X)\cong \bigoplus_{j\in \ZZ} \mathsf Y^j$ and 
$\mathsf Y^j{}_r=A^\circledast{}_{r+j}\tensor_{A_0} \Pi_j(\mathsf X)$. Clearly, 
$\mathsf Y^j\in\Proj(\Gr_{A^\circledast}^+\cat C)$. Since $\partial(A^\circledast{}_{r+j}\tensor_{A_0} 
\Pi_j(\mathsf X))\subset A^{\circledast}_{r+j+1}\tensor_{A_0}\Pi_{j+1}(\mathsf X)$ under our identifications,
it follows that $\partial^+_{\mathsf X}:\mathsf Y^j\to\mathsf Y^{j+1}$ is a morphism 
in~$\Gr_{A^\circledast}^+\cat C$. Thus, $(\mathsf Y^\bullet,\partial^+_{\mathsf X})$ is a 
complex in~$\Proj(\Gr_{A^\circledast}^+\cat C)$. Since $\mathsf X_j=0$ if 
$j\gg 0$, we conclude that $(\mathsf Y^\bullet,\partial)$ is bounded above. It is easy to see that 
we even have $(\mathsf Y^\bullet,\partial)\in \mathcal K^{\downarrow}(\Proj(\Gr_{A^\circledast}^+\cat C))$.

It remains to note that, similarly to the classical Koszul duality, the natural extension 
$(\dsoc^+ \mathsf I)^\bullet$ of $(\soc \mathsf I)^\bullet$ maps
$\mathcal K^{\uparrow}(\Inj(\Gr_{A}^-\cat C))$ to $\mathcal K^{\downarrow}(\Proj(\Gr_{A^\circledast}^+\cat C))$.
\end{proof}

\subsection{}\label{KDS135}
Now we consider the dual picture. Retain the assumptions of~\subsref{KDS50}. We will
decorate all functors related to $\Gr^\pm_{A^\circledast}\cat C$ with $\circledast$ to distinguish them 
from those related to $\Gr^\pm_{A}\cat C$. Let $\mathsf Y$ be an object in~$\Gr^-_{A^\circledast} \cat C$.
Then 
\begin{displaymath}
\mathsf P^\circledast{}^{-r} (\mathsf Y)=A^\circledast\tensor_{A_0} (K_{A^\circledast})_r\tensor_{A_0} \mathsf Y
\cong H^!\tensor_\kk A_0\tensor_{A_0} (K_{H^!})_r\tensor_\kk A_0\tensor_{A_0}\mathsf Y
\cong H^!\tensor_\kk (K_{H^!})_r\tensor_\kk \mathsf Y,
\end{displaymath}
hence 
\begin{displaymath}
(\Top\mathsf P^\circledast )^{-r}(\mathsf Y)\cong(K_{H^!})_r\tensor_\kk\mathsf Y.
\end{displaymath}
Since~$(H^!)^!\cong H$, we have that $(K_{H^!})_r$ identifies with~$H^*_r$ and so 
\begin{displaymath}
(\Top\mathsf P^\circledast )^{-r}(\mathsf Y)=H^*_r\tensor_\kk\mathsf Y\cong \Hom_\kk(H_r,\mathsf Y)
\end{displaymath}
where the isomorphism is given by $\xi\tensor y\mapsto (h\mapsto \xi(h)y)$. Thus, 
$(\Top\mathsf P^\circledast )^\bullet(\mathsf Y)$ identifies with $\mathsf I^0(\mathsf Y)$ where we regard 
$\mathsf Y$ as an $A$-module via the canonical epimorphism $A\twoheadrightarrow A_0$.

\propanch{KDS135}
\begin{prop}
For any $\mathsf Y\in\Gr_{A^\circledast}^+\cat C$, we have that $(\Top P^\circledast )^\bullet(\mathsf Y)$ is 
isomorphic to~$\mathsf I^0(\mathsf Y)$ as a left $A$-module and the $A$ action commutes with 
the differential~$\partial^\circledast{}^-_{\mathsf Y}$. In particular, 
$(\Top P^\circledast{})^\bullet(\mathsf Y)$ identifies with 
$(\mathsf X^\bullet,\partial^\circledast{}^-_{\mathsf Y})\in \mathcal K^{\uparrow}(\Inj(\Gr_A^-\cat C))$,
where $\mathsf X^j=\mathsf I^0(\Pi_j(\mathsf Y))\ds{j}$ and so $(\Top P^\circledast{})^\bullet$ 
extends to a functor 
\begin{displaymath}
(\dtop^- P^\circledast{})^\bullet:\mathcal K^{\downarrow}(\Proj(\Gr_{A^\circledast}^+ \cat C))
\to \mathcal K^{\uparrow}(\Inj(\Gr_A^- \cat C)). 
\end{displaymath}
\end{prop}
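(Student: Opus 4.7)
The plan is to carry out the proof in parallel with \propref{KDS130}, exchanging the roles of $A$ and $A^\circledast$, of $H$ and $H^!$, and of $\mathsf P^\bullet$ and $\mathsf I^\bullet$. First, combining the identifications already recorded in \subsref{KDS135} with the quadratic self-duality $(H^!)^!\cong H$ and \lemref{KDS25}, one obtains isomorphisms of $\kk$-vector spaces
\begin{equation*}
(\Top P^\circledast)^{-r}(\mathsf Y)\cong (K_{H^!})_r\tensor_\kk \mathsf Y\cong H_r^*\tensor_\kk \mathsf Y\cong \Hom_\kk(H_r,\mathsf Y);
\end{equation*}
viewing $\mathsf Y$ as an $A$-module via $A\twoheadrightarrow A_0$ and using $A_r\cong A_0\tensor_\kk H_r$ together with the standard adjunction, this space also identifies with $\Hom_{A_0}(A_r,\mathsf Y)=\mathsf I^0(\mathsf Y)_{-r}$, giving the candidate underlying isomorphism of graded vector spaces.

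I would then transport the $A^\circledast$-module structure on $(\Top P^\circledast)^{-r}(\mathsf Y)$ through this chain and check that on $H^*\tensor_\kk \mathsf Y$ it takes the following explicit form: the copy of $A_0$ acts diagonally via its comultiplication, the right $A_0$-action on $H^*$ dual to that on $H$, and the given left $A_0$-action on $\mathsf Y$, while $H$ acts by the left contraction of \lemref{KDS70}\eqref{lem:MA70.b} on the $H^*$-factor. A direct calculation dual to the one in \subsref{KDS90} then shows that this coincides with the natural $A=A_0\ltimes H$-action on $\mathsf I^0(\mathsf Y)=\bigoplus_r \Hom_{A_0}(A_r,\mathsf Y)$. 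Commutation with $\partial^\circledast{}^-$ is verified by noting that on the top quotient $\partial^\circledast{}^-_{\mathsf Y,-r}=\id_{H_1^!}^{\tensor r-1}\tensor \lact_{\mathsf Y}$, and invoking \lemref{KDS50} with the roles of $H$ and $H^!$ interchanged: under $\bar\psi$ this differential transports to the restriction, to the $\Hom_\kk$-pieces, of the differential $\partial^+_{\mathsf Y}$ of $\mathsf I^\bullet(\mathsf Y)$, so the required commutation reduces to the $A$-module computation already carried out in~\subsref{KDS90}.

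Finally, decomposing $\mathsf Y=\bigoplus_j\Pi_j(\mathsf Y)\ds{-j}$ and tracking internal bidegrees regroups $(\Top P^\circledast)^\bullet(\mathsf Y)$ as $\bigoplus_j\mathsf X^j$ with $\mathsf X^j=\mathsf I^0(\Pi_j(\mathsf Y))\ds{j}$. Each $\mathsf X^j$ lies in $\Inj(\Gr_A^-\cat C)$ by~\lemref{GEN.30}\eqref{lem:GEN.30.b}, the assumption $\mathsf Y\in\Gr_{A^\circledast}^+\cat C$ gives $\mathsf X^j=0$ for $j\ll 0$, and a bidegree estimate places the resulting complex in $\mathcal K^\uparrow(\Inj(\Gr_A^-\cat C))$. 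Extension of $(\Top P^\circledast)^\bullet$ to a functor $(\dtop^- P^\circledast)^\bullet$ on the homotopy category is then obtained by passage to total complexes, exactly as in~\propref{GEN.10}.

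The main obstacle I expect is the careful bookkeeping involved in the transport of module structures: one must check that the $A_0^{\cop}$-module-algebra structure on $H^!$ from~\lemref{KDS30}, combined with the defining multiplication of $A^\circledast=H^!\rtimes A_0^{\cop}$, transports under the dualities of \lemref{KDS25} to precisely the natural $A$-action on $\mathsf I^0(\mathsf Y)$; in particular the twist by~$\cop$ is essential for compatibility with the comultiplication governing the $A$-action on~$\Hom_\kk(H,\mathsf Y)$. Sign conventions in~$\partial^\circledast{}^-$ and their interplay with the grading shift~$\ds{j}$ also require care.
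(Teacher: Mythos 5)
Your argument follows the paper's proof essentially verbatim: the identification $(\Top\mathsf P^\circledast)^{-r}(\mathsf Y)\cong\Hom_\kk(H_r,\mathsf Y)\cong\mathsf I^0(\mathsf Y)_{-r}$ is the one set up just before the proposition, the $A$-action is recognized as coming from left contractions on the leftmost tensor factor of $(K_{H^!})_r$ while the differential acts through the rightmost factor on $\mathsf Y$, and the remaining verifications are reduced to the computations of \propref{KDS130} and \subsref{KDS90}, exactly as the paper does. The only deviation is your detour through \lemref{KDS50}, which the paper reserves for \propref{KDS150} and which is not needed (and not quite on target) here: commutation of the $A$-action with $\partial^\circledast{}^-_{\mathsf Y}$ already follows from the observation that the contraction and the differential involve opposite ends of the tensor product $(K_{H^!})_r\tensor_\kk\mathsf Y$.
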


\begin{proof}
We have 
\begin{displaymath}
\mathsf I^0(\mathsf Y)\cong \bigoplus_{i\in\ZZ} \Hom_\kk(H_i,\mathsf Y)\cong (\Top\mathsf P^\circledast )^\bullet(\mathsf Y),
\end{displaymath}
so we only need to check that the $A$-action on $\mathsf I^0(\mathsf Y)$ commutes with the differential 
induced by $d$. For this we observe that the action of $H$ on~$(K_{H^!})_r$ which yields the 
$A$ action is induced by the action of $H_1$ by {\em left} contractions, while the differential is 
given by the action of the rightmost factor of the tensor product on~$\mathsf X$ (cf.~\subsref{KDS70}).
The rest of the argument repeats the one in the proof of~\propref{KDS130} and is omitted.
\end{proof}

\subsection{}\label{KDS150}
Now we can establish the main property of functors constructed above in \propref{KDS130} and~\propref{KDS135}.

\propanch{KDS150}
\begin{prop}
Let $\cat C$ be a category of left $A_0$-modules satisfying \eqref{cnd:C1}--\eqref{cnd:C4} for 
$A=A_0\ltimes H$. Then
\begin{enumerate}[{\rm(a)}]
\item\label{prop:MA150.a} $(\dtop^-\mathsf P^\circledast)^\bullet\circ 
(\dsoc^+\mathsf I)^\bullet\cong\mathsf I^\bullet$ 
as endofunctors on $\mathcal K^{\uparrow}(\Inj(\Gr_A^-\cat C))$.
\item\label{prop:MA150.b} $(\dsoc^+\mathsf I)^\bullet\circ(\dtop^-\mathsf P^\circledast)^\bullet 
\cong\mathsf P^\circledast{}^\bullet$ as endofunctors on $\mathcal K^{\downarrow}(\Proj(\Gr_{A^\circledast}^+\cat C))$.
\end{enumerate}
\end{prop}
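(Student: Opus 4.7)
The plan is to unfold each composition using \propref{KDS130} and \propref{KDS135} as explicit double complexes and then identify the resulting total complexes with $\mathsf I^\bullet$ and $\mathsf P^\circledast{}^\bullet$ on the nose. For part~(a), fix $\mathsf X\in\Gr_A^-\cat C$. By \propref{KDS130}, the complex $(\dsoc^+\mathsf I)^\bullet(\mathsf X)$ identifies with $(\mathsf Y^\bullet,\partial^+_{\mathsf X})$ where $\mathsf Y^j=(A^\circledast\tensor_{A_0}\Pi_j(\mathsf X))\ds{-j}$, so that $\Pi_k(\mathsf Y^j)\cong H^!{}_{k+j}\tensor_\kk\Pi_j(\mathsf X)$ via $A^\circledast\cong H^!\tensor_\kk A_0$. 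Applying $(\dtop^-\mathsf P^\circledast)^\bullet$ to~$\mathsf Y^\bullet$ and invoking \propref{KDS135} on each $\mathsf Y^j$ produces a double complex with $(j,k)$-entry
\begin{displaymath}
\mathsf X^{j,k}\;=\;\mathsf I^0\bigl(H^!{}_{k+j}\tensor_\kk\Pi_j(\mathsf X)\bigr)\ds{k}.
\end{displaymath}
Gathering terms of total degree $p=j+k$ and using $H^!{}_p\cong(K_H)_p^*$ together with the isomorphism $(K_H)_p^*\tensor_\kk \Pi_j(\mathsf X)\cong\Hom_{A_0}((K_A)_p,\Pi_j(\mathsf X))$ (which follows from $(K_A)_p\cong A_0\tensor_\kk(K_H)_p$ as a left $A_0$-module and \lemref{KDS25}), one recovers at position~$p$
\begin{displaymath}
\bigoplus_{j\in\ZZ}\mathsf I^0\bigl(\Hom_{A_0}((K_A)_p,\Pi_j(\mathsf X))\bigr)\ds{p-j}\;=\;\mathsf I^p(\mathsf X),
\end{displaymath}
matching $\mathsf I^\bullet(\mathsf X)$ as a graded $A$-module by~\eqref{eq:I_r defn}.

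The crux is then verifying that the total differential agrees with $\partial^+_{\mathsf X}=d_{\mathsf X,r}+(-1)^r\rho_{\mathsf X,r}$. The vertical differential of the double complex, coming from $(\dtop^-\mathsf P^\circledast)^\bullet$ applied to each $\mathsf Y^j$, is induced by the left-contraction action of~$H_1$ on~$(K_{H^!})_\bullet$ used in the proof of~\propref{KDS135} and recorded in \lemref{KDS70}; under $(K_{H^!})_r\cong H_r^*$ this is precisely precomposition with the Koszul map~$d^A$, and therefore yields the $d_{\mathsf X,r}$ summand of~$\partial^+_{\mathsf X}$. The horizontal differential is inherited by~$\mathsf Y^\bullet$ from $\partial^+_{\mathsf X}$ via~\propref{KDS130}; unwinding the left $A^\circledast$-action from~\subsref{KDS90} one checks that it encodes the $\rho_{\mathsf X,r}$ contribution, with the sign $(-1)^r$ supplied by the standard total-complex convention used to extend $\Phi^-$ in~\propref{GEN.90}. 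Functoriality in~$\mathsf X$ is immediate from that of both constructions, and the resulting isomorphism of functors $\Gr_A^-\cat C\to\mathcal K^{\uparrow}(\Inj(\Gr_A^-\cat C))$ propagates to the homotopy category through the same total-complex extension used to define~$\Phi^+$, giving~(a).

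Part~(b) is proved symmetrically. Starting from $\mathsf Y\in\Gr_{A^\circledast}^+\cat C$, \propref{KDS135} presents $(\dtop^-\mathsf P^\circledast)^\bullet(\mathsf Y)$ as a complex with $j$th term $\mathsf I^0(\Pi_j(\mathsf Y))\ds{j}$; applying $(\dsoc^+\mathsf I)^\bullet$ and invoking~\propref{KDS130} on each component yields another double complex whose total complex, after the analogous identification using~\eqref{eq:P_r defn} on the $A^\circledast$-side together with $H_r\cong(H^!{}_r)^*$, reproduces $\mathsf P^\circledast{}^\bullet(\mathsf Y)$ with its differential from~\subsref{GEN.90}. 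I expect the main obstacle in both parts to be the bookkeeping in matching differentials: the graded-object identification falls out cleanly from \propref{KDS130} and \propref{KDS135}, but verifying that the explicit formulas for $\partial^+$ and $\partial^-$ (from~\subsref{GEN.70} and~\subsref{GEN.90}), their restrictions to $(\soc\mathsf I)^\bullet$ and $(\Top\mathsf P^\circledast)^\bullet$ in~\subsref{GEN.120}, and the $A^\circledast$-action of~\subsref{KDS90} combine to produce exactly the signed sum $d+(-1)^r\rho$ at the level of total complexes requires careful unraveling of the actions and contractions involved.
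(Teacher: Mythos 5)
Your strategy is sound and reaches the right answer, but it is genuinely different from the paper's. The paper does \emph{not} totalize a double complex for a general $\mathsf X$: it first argues that every object of $\mathcal K^{\uparrow}(\Inj(\Gr_A^-\cat C))$ (respectively, $\mathcal K^{\downarrow}(\Proj(\Gr_{A^\circledast}^+\cat C))$) is a limit of objects of the triangulated subcategory generated by $\cat C$ under shifts, and thereby reduces both assertions to a single object $X\in\cat C$ viewed as a trivial complex concentrated in degree zero. For such an $X$, $(\dsoc^+\mathsf I)^\bullet(X)$ collapses to the trivial complex of $\mathsf P^{\circledast}{}^0(X)$, so only one application of $(\Top\mathsf P^\circledast)^\bullet$ is needed, the term-by-term identification $\mathsf Z^j\cong\mathsf I^j(X)$ is exactly your chain of isomorphisms, and --- crucially --- the summand $\rho_{X,r}$ of $\partial^+_X$ vanishes because $A_1$ acts by zero on $\gr_0(X)$; the entire differential comparison then reduces to \lemref{KDS50} (the compatibility $\bar\psi_{i-1,j+1}(f\circ\bar m_{j+1,i-1})=\bar m^!_{i,j}\circ\bar\psi_{i,j}(f)$), with no signs to track. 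Your direct computation for arbitrary $\mathsf X\in\Gr_A^-\cat C$ buys you independence from the somewhat informal limiting argument, but at the price of having to verify, and not merely assert, that the horizontal differential reproduces $(-1)^r\rho_{\mathsf X,r}$ and that the totalization sign conventions of \propref{GEN.10} and \propref{GEN.90} conspire to give exactly $d_{\mathsf X,r}+(-1)^r\rho_{\mathsf X,r}$; this is the one place where your write-up stops short (you flag it yourself), and it is precisely the bookkeeping the paper's reduction is designed to avoid. Two small corrections to your sketch: the vertical differential of $(\Top\mathsf P^\circledast)^\bullet$ is induced by the multiplication maps $\bar m^!_{r,i}$ (hence by \lemref{KDS50}, not by \lemref{KDS70} --- the left-contraction action of $H_1$ gives the $A$-module structure on $(\Top\mathsf P^\circledast)^\bullet$, not its differential); and you will also need the compatibility of totalization with composition of the two extended functors (an iterated total complex identification) to pass from objects of $\Gr_A^-\cat C$ to all of $\mathcal K^{\uparrow}(\Inj(\Gr_A^-\cat C))$.
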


\begin{proof}
Let $\cat X$ denote the smallest triangulated subcategory of $\mathcal K^{\uparrow}(\Inj(\Gr_A^-\cat C))$
containing $\cat C$ (which is identified with a subcategory of~$\Gr_A^-\cat C$ via the full exact embedding $\gr_0$, see~\subsref{GEN.5}) and closed under the degree shift. Then every object in
$\mathcal K^{\uparrow}(\Inj(\Gr_A^-\cat C))$ can be considered as a limit of objects
in $\cat X$ in the usual way (by cutting increasing finite pieces both in position and
gradings). Similarly, every object in $\mathcal K^{\downarrow}(\Proj(\Gr_{A^\circledast}^+\cat C))$
is a limit of objects in the category $\cat Y$, where the latter is 
defined as the smallest triangulated subcategory 
of $\mathcal K^{\downarrow}(\Proj(\Gr_{A^\circledast}^+\cat C))$
containing $\cat C$ and closed under the degree shift. 
Therefore it suffices to prove both assertions for 
an object $X$ in~$\mathscr C$. 

Using~\propref{KDS130}, we conclude that $(\dsoc^+\mathsf I)^\bullet(X)$ identifies with the 
trivial complex of $\mathsf P^\circledast{}^0(X)$ (cf.~\subsref{GEN.new1}). Furthermore,
\begin{displaymath}
(\dtop^-\mathsf P^\circledast)^\bullet\circ (\dsoc^+\mathsf I)^\bullet(X)=
(\Top P^\circledast)^\bullet(\mathsf P^\circledast{}^0(X))= (\mathsf Z^\bullet,d),
\end{displaymath}
where, by \subsref{KDS130}--\subsref{KDS150}, we have
\begin{multline*}
\mathsf Z^j=\mathsf I^0(\Pi_j(\mathsf P^\circledast{}^0(X)))\ds{j}=\mathsf I^0(A^\circledast_j\tensor_{A_0}X)\ds{j}
=\bigoplus_{i\in\ZZ}\Hom_{A_0}(A_i,A^{\circledast}_j\tensor_{A_0} X)\ds{j}\\
\cong \bigoplus_{i\in\ZZ} \Hom_{\kk}(H_i,H^!_j\tensor_\kk X)\ds{j}
\cong \bigoplus_{i\in\ZZ} \Hom_\kk((H^!)_j^*\tensor_\kk H_i,X)\\
\cong \bigoplus_{i\in\ZZ} \Hom_\kk( (K_H)_j\tensor_\kk H_i,X)
\cong \mathsf I^j(X)
\end{multline*}
as objects in $\Gr_A^-\cat C$. Furthermore, note that on~$\mathsf Z^\bullet$ the differential 
$d_{-r}$ is given by the map $\bar m^!_{r,i}\tensor \id_X$, where
$\bar m^!_{r,i}: (K_{H^!})_r\tensor_\kk H^!{}_i\to  (K_{H^!})_{r-1}\tensor_\kk H^!{}_{i+1}$ 
was defined in~\subsref{KDS50}. Since $\Hom_\kk( (K_H)_j\tensor_\kk H_i,X)\cong ((K_H)_j
\tensor_\kk H_i)^*\tensor_\kk X$ and, under that identification, $(\partial^+_X f)\tensor x=f\circ 
\bar m_{j+1,i-1}\tensor x$, $f\in ( (K_H)_j\tensor_\kk H_i)^*$, it follows from~\lemref{KDS50}
that $(\mathsf I^\bullet(X),\partial^+_X)$ is naturally isomorphic to $(\mathsf Z^\bullet,d)$. This proves~\eqref{prop:MA150.a}.

Similarly, we identify $(\dtop^-\mathsf P^\circledast)^\bullet(X)$ with the trivial complex of~$\mathsf I^0(X)$. Then
\begin{displaymath}
(\dsoc^+\mathsf I)^\bullet\circ (\dtop^-\mathsf P^\circledast)^\bullet(X)
=(\soc \mathsf I)^\bullet(\mathsf I^0(X))=(\mathsf Y^\bullet,\partial),
\end{displaymath}
where 
\begin{multline*}
\mathsf Y^j=(A^{\circledast}\tensor_{A_0}\Pi_j(\mathsf I^0(X)))\ds{-j}
\cong H^!\tensor_\kk \Hom_\kk(H_j,X)\ds{-j}\\
\cong H^!\tensor_\kk (H_j)^*\tensor_\kk X=\mathsf P^\circledast{}^{-j}(X).
\end{multline*}
It is now easy to see that the differential~$\partial$ coincides with $\partial^\circledast{}^-_{X}$.
\end{proof}

\subsection{}\label{KDS170}
The following theorem is the main result of this paper and provides 
an analogue of Koszul duality for our setting.

\thmanch{KDS170}
\begin{thm}
Let $A_0$ be a $\kk$-bialgebra. Let $H$ be a right $A_0$-module algebra 
with $H_0=\kk$, which is quadratic with $H_1$ finite dimensional. Let $A=A_0\ltimes H$, $A^\circledast=H^!\rtimes A_0^{\cop}$ and
let $\cat C$ be a category of left $A_0$-modules satisfying \eqref{cnd:C1}--\eqref{cnd:C4}.
Then the following are equivalent:
\begin{enumerate}[{\rm(a)}]
\item\label{KDS170.a} $(\dtop^-\mathsf P^\circledast)^\bullet\circ (\dsoc^+\mathsf I)^\bullet$ is 
isomorphic to the identity functor on $\mathcal K^{\uparrow}(\Inj(\Gr_A^-\cat C))$.
\item\label{KDS170.a'} $(\dsoc^+\mathsf I)^\bullet\circ(\dtop^-\mathsf P^\circledast)^\bullet $ is 
isomorphic to the identity functor on $\mathcal K^{\downarrow}(\Proj(\Gr_{A^\circledast}^+\cat C))$.
\item\label{KDS170.b} For all $X$ in~$\mathscr C$, $X\cong \mathsf I^\bullet(X)$ in
$\mathcal D^+(\underline{\Inj(\Gr_A^-\cat C)})$.
\item\label{KDS170.b'} For all $X$ in~$\mathscr C$, $X\cong \mathsf P^\circledast{}^\bullet(X)$ in 
$\mathcal D^-(\overline{\Proj(\Gr_{A^\circledast}^+\cat C)})$.
\item\label{KDS170.c} $H$ is Koszul.
\end{enumerate}
\end{thm}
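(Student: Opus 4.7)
The strategy is to invoke \propref{KDS150} to reduce \eqref{KDS170.a} and \eqref{KDS170.a'} to the assertions that $\mathsf I^\bullet$ and $\mathsf P^\circledast{}^\bullet$ are respectively homotopy-equivalent to the identity on $\mathcal K^{\uparrow}(\Inj(\Gr_A^-\cat C))$ and on $\mathcal K^{\downarrow}(\Proj(\Gr_{A^\circledast}^+\cat C))$. After this reduction the task decouples into a standard Koszul-type resolution question on each side, with \eqref{KDS170.c} as the common hinge. Accordingly, I would prove the two chains \eqref{KDS170.a}~$\Leftrightarrow$~\eqref{KDS170.b}~$\Leftrightarrow$~\eqref{KDS170.c} and \eqref{KDS170.a'}~$\Leftrightarrow$~\eqref{KDS170.b'}~$\Leftrightarrow$~\eqref{KDS170.c}.

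For \eqref{KDS170.a}~$\Rightarrow$~\eqref{KDS170.b} I simply restrict the homotopy isomorphism $\mathsf I^\bullet\cong\id$ to the image of $\gr_0:\cat C\hookrightarrow \Gr_A^-\cat C$ and pass to the derived category of $\underline{\Inj(\Gr_A^-\cat C)}$, whose construction is tailored so that $\mathcal K^+(\Inj(\Gr_A^-\cat C))$ computes $\mathcal D^+(\underline{\Inj(\Gr_A^-\cat C)})$. For the converse, I would mimic the generation argument in the proof of~\propref{KDS150}: every object of $\mathcal K^{\uparrow}(\Inj(\Gr_A^-\cat C))$ is built as a limit from iterated extensions of shifts of objects in $\gr_0(\cat C)$, and both $\mathsf I^\bullet$ and $\id$ are triangulated and commute with the degree shift by \lemref{GEN.85}\eqref{lem:GEN.85.a}. \lemref{GEN.85}\eqref{lem:GEN.85.0} supplies a canonical natural transformation $\gr_0\to \mathsf I^\bullet$ that is an isomorphism on $\mathsf H_0$; hypothesis \eqref{KDS170.b} says exactly that this transformation is a quasi-isomorphism, i.e., that $\mathsf I^\bullet(X)$ is an injective resolution, and hence homotopy equivalent to $\gr_0(X)$ in the homotopy category. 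Extending the isomorphism along the generation yields \eqref{KDS170.a}.

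The core computation is \eqref{KDS170.b}~$\Leftrightarrow$~\eqref{KDS170.c}. For $X\in\cat C$ placed in graded degree zero, the identification $(K_A)_r\cong A_0\tensor_\kk (K_H)_r$ from~\subsref{KDS15} and $\Hom$-tensor adjunction give
\begin{displaymath}
\mathsf I^r(X)_s\cong \Hom_\kk\bigl((K_H)_r\tensor_\kk H_{-r-s},\,X\bigr),
\end{displaymath}
with differential induced by $m_{H_1,H}$; the explicit matching with the dual Koszul differential $\bar m_{r,i}$ is provided by~\lemref{KDS50}. Thus, as a complex of $\kk$-vector spaces, $\mathsf I^\bullet(X)$ equals $\Hom_\kk(K_H\tensor_\kk H,X)$. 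Conditions \eqref{cnd:C1}--\eqref{cnd:C4} ensure that each $X\in\cat C$ is both injective in $\cat C$ and flat enough that the relevant $\Hom_{A_0}$-functors are exact, so the acyclicity in positive homological degrees of $\mathsf I^\bullet(X)$ for every $X\in\cat C$ is equivalent to exactness of the right Koszul complex $K_H\tensor_\kk H$ in positive degrees, which is the defining condition \cite{BGS}*{\S2.6} for Koszulity of $H$.

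The primed chain \eqref{KDS170.a'}~$\Leftrightarrow$~\eqref{KDS170.b'}~$\Leftrightarrow$~\eqref{KDS170.c} proceeds dually, using~\propref{KDS135} in place of~\propref{KDS130}, the left Koszul complex $H^!\tensor_\kk K_{H^!}$ of $H^!$ (which controls $\mathsf P^\circledast{}^\bullet$ via~\subsref{KDS135}), and the classical symmetry that $H$ is Koszul iff $H^!$ is. The main obstacle I expect is the equivalence \eqref{KDS170.b}~$\Leftrightarrow$~\eqref{KDS170.c}: one must carefully track the $A_0$-bimodule structure and the twisted $A^\circledast$-action described in~\subsref{KDS90}, verify via~\lemref{KDS50} that the induced differential on $\mathsf I^\bullet(X)$ really is the $\Hom_\kk(-,X)$-dual of the Koszul differential on $K_H\tensor_\kk H$, and ensure that \emph{all} objects of $\cat C$ together are sufficient to detect Koszulity (rather than only a weaker condition). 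The conditions \eqref{cnd:C1}--\eqref{cnd:C4} on $\cat C$ are engineered precisely to make this reduction valid.
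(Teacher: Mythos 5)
Your proposal is correct and follows essentially the same route as the paper: reduce \eqref{KDS170.a} and \eqref{KDS170.a'} to $\mathsf I^\bullet$ and $\mathsf P^\circledast{}^\bullet$ via \propref{KDS150} together with the generation-by-$\gr_0(\cat C)$ argument, use \lemref{GEN.85} to translate \eqref{KDS170.b} and \eqref{KDS170.b'} into exactness of $\mathsf I^\bullet(X)$ and $\mathsf P^\circledast{}^\bullet(X)$, identify these (via \subsref{KDS15}, \lemref{KDS50} and conditions \eqref{cnd:C1}--\eqref{cnd:C4}) with the $\Hom_\kk(-,X)$-dual of the right Koszul complex of $H$ and the left Koszul complex of $H^!$, and close the loop with \cite{BGS}*{Theorem~2.6.1} and the fact that $H$ is Koszul iff $H^!$ is. The "main obstacle" you flag (matching the differential with the dual Koszul differential) is exactly what \lemref{KDS50} was set up to handle, so no new idea is needed there.
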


In view of this theorem, it is natural to call either of the dual properties~\eqref{KDS170.b} 
and~\eqref{KDS170.b'} the $\mathscr C$-Koszulity. Note that, due to Lemma~\ref{GEN.85}, we always have
for $X$ in~$\mathscr C$
\begin{equation}\label{eq:K-arrow}
\mathsf I^\bullet(X)\in \mathcal K^{\uparrow}(\Inj(\Gr_A^-\cat C))\quad \text{ and }\quad
\mathsf P^\circledast{}^\bullet(X)\in \mathcal K^{\downarrow}(\Proj(\Gr_{A^\circledast}^+\cat C)). 
\end{equation}

\begin{proof}
By~\propref{KDS150}, \eqref{KDS170.a} is equivalent to~\eqref{KDS170.b} and \eqref{KDS170.a'} is 
equivalent to~\eqref{KDS170.b'}. Furthermore, identifying~$X$ with its trivial complex (cf.~\subsref{GEN.new1}) we have
$\mathsf H_0(X)=X\cong \mathsf H_0(\mathsf I^\bullet(X))$ by \lemref{GEN.85}\eqref{lem:GEN.85.0}. Then
by \lemref{GEN.85}\eqref{lem:GEN.85.b}, part \eqref{KDS170.b} is equivalent to the exactness of the 
complex $\mathsf I^\bullet(X)$. By~\eqref{cnd:C2}--\eqref{cnd:C1}, $\mathsf I^\bullet(X)$ is exact 
if and only if the right Koszul complex $K_H\tensor_\kk H$ is exact, which by~\cite{BGS}*{Theorem~2.6.1} 
is equivalent to~$H$ being Koszul. Thus, \eqref{KDS170.b}$\iff$\eqref{KDS170.c}. Similarly, \eqref{KDS170.b'} 
is equivalent to the exactness of~$\mathsf P^\circledast{}^\bullet(X)$ which is equivalent to the exactness 
of the left Koszul complex for~$H^!$ which happens if and only if~$H^!$ is Koszul. By \cite{BGS}*{Proposition~2.9.1},
if~$H$ is Koszul then so is~$H^!$.  Since in our case $(H^!)^!\cong H$, it follows that $H^!$ is Koszul if 
and only if~$H$ is Koszul. Thus, \eqref{KDS170.b'}$\iff$\eqref{KDS170.c}.
\end{proof}

Note that, in the case $A_0$ is semi-simple, the above theorem reduces to the classical Koszul duality
from \cite{BGS}. The other extreme case $H=H_0=\Bbbk$ provides a trivial generalization of the
classical Koszul duality for a semi-simple algebra (concentrated in the zero component) to the case of a 
non semi-simple algebra concentrated in the zero component.

\subsection{}\label{KDS190}
As an immediate corollary of~\thmref{KDS170} we obtain the following analogue of 
\cite{BGS}*{Theorem~2.12.5}, \cite{MOS}*{Theorem~30}  and \cite{Ma}*{Theorem~4.3.1} for our setting.
Define $\mathcal D^{\uparrow}(\underline{\Inj(\Gr_A^-\cat C)})$  as the 
isomorphim closure of $\mathcal K^{\uparrow}(\Inj(\Gr_A^-\cat C))$ inside the category
$\mathcal D^+(\underline{\Inj(\Gr_A^-\cat C)})$. Similarly, we define
$\mathcal D^{\downarrow}(\overline{\Proj(\Gr_{A^\circledast}^+\cat C)})$
as the isomorphim closure of $\mathcal K^{\downarrow}(\Proj(\Gr_{A^\circledast}^+\cat C))$
inside $\mathcal D^-(\overline{\Proj(\Gr_{A^\circledast}^+\cat C)})$. By definition,
we have equivalences
\begin{gather*}
\mathcal D^{\uparrow}(\underline{\Inj(\Gr_A^-\cat C)})\cong
\mathcal K^{\uparrow}(\Inj(\Gr_A^-\cat C))\\
\mathcal D^{\downarrow}(\overline{\Proj(\Gr_{A^\circledast}^+\cat C)})\cong
\mathcal K^{\downarrow}(\Proj(\Gr_{A^\circledast}^+\cat C)) .
\end{gather*}
Note that if $\mathscr C$ is semisimple then $\Gr_A^-\cat C$ and $\Gr_{A^\circledast}^+\cat C$ are abelian and 
$\mathcal D^{\uparrow}(\Gr_A^-\cat C)$, $\mathcal D^{\downarrow}(\Gr_{A^\circledast}^+\cat C)$ can be defined as in~\cite{BGS}*{\S2.12} and~\cite{MOS}*{\S2.4}.
\coranch{KDS190}
\begin{cor}
Suppose that $H$ is Koszul. Then:
\begin{enumerate}[{\rm(a)}]
 \item the categories 
$\mathcal D^{\uparrow}(\underline{\Inj(\Gr_A^-\cat C)})$ and 
$\mathcal D^{\downarrow}(\overline{\Proj(\Gr_{A^\circledast}^+\cat C)})$ are equivalent;
\item if $\cat C$ is semisimple then $\mathcal D^{\uparrow}(\Gr_A^-\cat C)\cong \mathcal D^{\downarrow}(\Gr_{A^{\circledast}}^+\cat C)$.
\end{enumerate}
\end{cor}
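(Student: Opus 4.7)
The plan is to deduce both parts essentially as bookkeeping from Theorem~\ref{KDS170}. Since $H$ is assumed Koszul, condition~\eqref{KDS170.c} of that theorem holds, and hence by the equivalences \eqref{KDS170.c}$\iff$\eqref{KDS170.a} and \eqref{KDS170.c}$\iff$\eqref{KDS170.a'} the functors
\[
(\dsoc^+\mathsf I)^\bullet:\mathcal K^{\uparrow}(\Inj(\Gr_A^-\cat C))\to \mathcal K^{\downarrow}(\Proj(\Gr_{A^\circledast}^+\cat C))
\]
and
\[
(\dtop^-\mathsf P^\circledast)^\bullet:\mathcal K^{\downarrow}(\Proj(\Gr_{A^\circledast}^+\cat C))\to \mathcal K^{\uparrow}(\Inj(\Gr_A^-\cat C))
\]
are mutually quasi-inverse. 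So the first step is simply to record that the Koszulity of $H$ upgrades the isomorphisms of endofunctors in \propref{KDS150} to equivalences of these two homotopy categories.

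For part (a), I would then splice in the chain of equivalences already stated immediately before the corollary, namely $\mathcal D^{\uparrow}(\underline{\Inj(\Gr_A^-\cat C)})\cong \mathcal K^{\uparrow}(\Inj(\Gr_A^-\cat C))$ and $\mathcal D^{\downarrow}(\overline{\Proj(\Gr_{A^\circledast}^+\cat C)})\cong \mathcal K^{\downarrow}(\Proj(\Gr_{A^\circledast}^+\cat C))$. Composing these with the homotopy-level equivalence from the preceding paragraph yields
\[
\mathcal D^{\uparrow}(\underline{\Inj(\Gr_A^-\cat C)})\simeq \mathcal D^{\downarrow}(\overline{\Proj(\Gr_{A^\circledast}^+\cat C)}).
\]
The only verification required is that $(\dsoc^+\mathsf I)^\bullet$ and $(\dtop^-\mathsf P^\circledast)^\bullet$ really land in the claimed subcategories $\mathcal K^{\uparrow}$ and $\mathcal K^{\downarrow}$, but this is already encoded in~\eqref{eq:K-arrow} together with the explicit descriptions in \propref{KDS130} and~\propref{KDS135}.

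For part (b), I would observe that when $\cat C$ is semisimple every short exact sequence in $\cat C$ splits, and hence $\Gr_A^-\cat C$ and $\Gr_{A^\circledast}^+\cat C$ are themselves abelian. In this case the injective and projective abelianizations~$\underline{(-)}$ and~$\overline{(-)}$ become superfluous, the categories $\mathcal D^{\uparrow}(\Gr_A^-\cat C)$ and $\mathcal D^{\downarrow}(\Gr_{A^\circledast}^+\cat C)$ can be defined directly as in~\cite{BGS}*{\S2.12} and~\cite{MOS}*{\S2.4}, and part~(a) specializes to the asserted equivalence.

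There is essentially no obstacle: the content of the corollary lies entirely in Theorem~\ref{KDS170}. The one place where some genuine care is needed is in part~(b), namely in justifying that under semisimplicity of $\cat C$ the abelianizations $\underline{\Inj(\Gr_A^-\cat C)}$ and $\overline{\Proj(\Gr_{A^\circledast}^+\cat C)}$ are equivalent to $\Gr_A^-\cat C$ and $\Gr_{A^\circledast}^+\cat C$ respectively in a way compatible with passage to derived categories; this is standard from~\cite{Fr} and~\cite{MM}*{\S3.1}.
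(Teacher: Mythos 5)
Your proposal is correct and follows essentially the same route as the paper: part (a) is obtained by combining the equivalences \eqref{KDS170.a}, \eqref{KDS170.a'} of \thmref{KDS170} with the definitional identifications of $\mathcal D^{\uparrow}(\underline{\Inj(\Gr_A^-\cat C)})$ and $\mathcal D^{\downarrow}(\overline{\Proj(\Gr_{A^\circledast}^+\cat C)})$ with the corresponding homotopy categories, and part (b) is the specialization to semisimple $\cat C$ using \eqref{eq:K-arrow} and the remark preceding the corollary. The paper's own proof is a two-line version of exactly this argument.
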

\begin{proof}
The first assertion is immediate from the definitions and \thmref{KDS170}. The second assertion follows from \thmref{KDS170} and~\eqref{eq:K-arrow}.
\end{proof}

\section{Examples}\label{snew5}

\subsection{}
Let $\lie g$ be a semi-simple Lie algebra and $V$ a finite dimensional $\lie g$-module.
Consider the generalized Takiff Lie algebra $\lie g\ltimes V$ as in \subsref{KDS20}.
Take~$\widehat{\cat C}=\mathscr C$ to be the (abelian) category of finite dimensional $\lie g$-modules.
Clearly, $\cat C$ satisfies the conditions \eqref{cnd:C1}--\eqref{cnd:C4}. This
is exactly the case considered in \cite{CG-2} (where $V$ was the adjoint representation of~$\lie g$) and then in~\cite{CKR}.

Let $H=S(V)$ which we regard as a {\em right} $U(\lie g)$-module algebra. Since $U(\lie g)$ 
is cocommutative and $H^!=\twedge V^*$ as a {\em left} $U(\lie g)$-module algebra, our construction 
establishes a duality between a category of graded modules with graded component in~$\cat C$ over the 
generalized Takiff Lie algebra $\lie g\ltimes V$ and a category of graded modules with graded components 
in~$\cat C$ over the Lie superalgebra $\lie s=\lie s_0\oplus\lie s_1$ with $\lie s_0=\lie g$ and $\lie s_1= V$, 
see \subsref{KDS20} for the definition. Note that we should take $V$ and not~$V^*$ as a 
$\lie g$-module in the definition of~$\lie s$.

\subsection{}
The previous example admits a non-trivial generalization. For the same  $\lie g$, $V$  and $\lie g\ltimes V$ 
as in the previous example, choose as $\widehat{\cat C}$ the Bernstein-Gelfand-Gelfand category 
$\mathscr O_{\lie g}$ (\cite{BGG}). Then $\mathscr C$ can be taken to be one of the following categories
\begin{itemize}
\item[-] $\Proj(\cat O_{\lie g})$;
\item[-] $\Inj(\cat O_{\lie g})$;
\item[-] the category of tilting modules in~$\cat O_{\lie g}$ (see~\cite{CR} or \cite{Hu}*{Chapter~11}).
\end{itemize}
Conditions~\eqref{cnd:C1} and~\eqref{cnd:C2} are clear for all these subcategories while conditions
\eqref{cnd:C3} and~\eqref{cnd:C4} follow from the observation that all of them are closed under 
tensoring with finite dimensional $\lie g$-modules. 

Our results again establish Koszul duality between the category of graded modules with graded component 
in~$\cat C$ over $\lie g\ltimes V$ and the  category of graded modules with graded components 
in~$\cat C$ over the corresponding generalized Takiff Lie superalgebra from \subsref{KDS20}.

\subsection{}
Another family of examples can be obtained in the framework proposed in~\cites{BZ,Z1}. Let $A_0$ be the quantized enveloping algebra~$U_q(\lie g)$
and let $H=S_q(V_q)$ where $V_q$ is a right $U_q(\lie g)$-module and $S_q(V_q)$ is its braided symmetric algebra as defined in~\cite{BZ}.
If $S_q(V_q)$ is a flat deformation of the symmetric algebra $S(V)$ of the $q=1$ limit~$V$ of~$V_q$, then it is Koszul and its quadratic 
dual is the braided exterior algebra of~$V_q{}^*$ which again is regarded as a left module. 
Our construction thus establishes a duality between a category of graded modules with finite dimensional
graded pieces over the semidirect products of $U_q(\lie g)$ with the braided symmetric algebra and
with the braided exterior algebra, respectively. It should be noted that $\lie g$-modules~$V$ for which $S_q(V_q)$ is a flat deformation
of~$S(V)$ are rather rare.
All simple modules with this property where classified in~\cite{Z}, and it should be noted that 
the adjoint representation of~$\lie g$ is not among them. The first non-simple example of~$V$ for which 
$S_q(V_q)$ is flat was constructed in~\cite{BG}
where the corresponding semidirect product is one of the key ingredients of the construction. The relationship between generalized 
Takiff algebras and semidirect products $U_q(\lie g)\ltimes S_q(V_q)$ was studied in~\cite{Z1}. 

\begin{bibdiv}
\begin{biblist}
\bib{BGG}{article}{
   author={Bern{\v{s}}te{\u\i}n, I. N.},
   author={Gel{\cprime}fand, I. M.},
   author={Gel{\cprime}fand, S. I.},
   title={A certain category of ${\germ g}$-modules},
   language={Russian},
   journal={Funkcional. Anal. i Prilo\v zen.},
   volume={10},
   date={1976},
   number={2},
   pages={1--8},
}

\bib{BGS}{article}{
   author={Beilinson, Alexander},
   author={Ginzburg, Victor},
   author={Soergel, Wolfgang},
   title={Koszul duality patterns in representation theory},
   journal={J. Amer. Math. Soc.},
   volume={9},
   date={1996},
   number={2},
   pages={473--527},
   issn={0894-0347},
}

\bib{BG}{article}{
   author={Berenstein, Arkady},
   author={Greenstein, Jacob},
   title={Quantum folding},
   journal={Int. Math. Res. Not.},
   date={2011},
   number={21},
   pages={4821--4883},
   issn={1073-7928},
}

\bib{BZ}{article}{
   author={Berenstein, Arkady},
   author={Zwicknagl, Sebastian},
   title={Braided symmetric and exterior algebras},
   journal={Trans. Amer. Math. Soc.},
   volume={360},
   date={2008},
   number={7},
   pages={3429--3472},
   issn={0002-9947},
}
\bib{Ch}{article}{
   author={Chari, Vyjayanthi},
   title={Minimal affinizations of representations of quantum groups: the
   rank $2$ case},
   journal={Publ. Res. Inst. Math. Sci.},
   volume={31},
   date={1995},
   number={5},
   pages={873--911},
   issn={0034-5318},
}

\bib{CG-1}{article}{
   author={Chari, Vyjayanthi},
   author={Greenstein, Jacob},
   title={Current algebras, highest weight categories and quivers},
   journal={Adv. Math.},
   volume={216},
   date={2007},
   number={2},
   pages={811--840},
}

\bib{CG-2}{article}{
   author={Chari, Vyjayanthi},
   author={Greenstein, Jacob},
   title={A family of Koszul algebras arising from finite-dimensional
   representations of simple Lie algebras},
   journal={Adv. Math.},
   volume={220},
   date={2009},
   number={4},
   pages={1193--1221},
}

\bib{CKR}{article}{
   author={Chari, Vyjayanthi},
   author={Khare, Apoorva},
   author={Ridenour, Tim},
   title={Faces of polytopes and Koszul algebras},
   journal={J. Pure Appl. Algebra},
   volume={216},
   date={2012},
   number={7},
   pages={1611--1625},
   issn={0022-4049},
}

\bib{CR}{article}{
   author={Collingwood, David H.},
   author={Irving, Ronald S.},
   title={A decomposition theorem for certain self-dual modules in the
   category ${\scr O}$},
   journal={Duke Math. J.},
   volume={58},
   date={1989},
   number={1},
   pages={89--102},
}

\bib{Fr}{article}{
   author={Freyd, Peter},
   title={Representations in abelian categories},
   conference={
      title={Proc. Conf. Categorical Algebra},
      address={La Jolla, Calif.},
      date={1965},
   },
   book={
      publisher={Springer, New York},
   },
   date={1966},
   pages={95--120}
}

\bib{Hu}{book}{
   author={Humphreys, James E.},
   title={Representations of semisimple Lie algebras in the BGG category
   $\scr{O}$},
   series={Graduate Studies in Mathematics},
   volume={94},
   publisher={American Mathematical Society, Providence, RI},
   date={2008},
   pages={xvi+289},
}

\bib{Ke}{article}{
   author={Keller, Bernhard},
   title={Deriving DG categories},
   journal={Ann. Sci. \'Ecole Norm. Sup. (4)},
   volume={27},
   date={1994},
   number={1},
   pages={63--102},
}

\bib{KR}{article}{
    author = {Kirillov, A. N.}, author = {Reshetikhin, N. Yu.},
     title = {Representations of {Y}angians and multiplicities of the
              inclusion of the irreducible components of the tensor product
              of representations of simple {L}ie algebras},
   journal = {Zap. Nauchn. Sem. Leningrad. Otdel. Mat. Inst. Steklov.
              (LOMI)},
    volume = {160},
      date = {1987},
     pages = {211\ndash221},
      issn = {0373-2703},
}

\bib{Ma}{article}{
   author={Madsen, Dag},
   title={On a common generalization of Koszul duality and tilting equivalence},
   journal={Adv. Math.},
   volume={227},
   date={2011},
   number={6},
   pages={2327--2348},
}

\bib{MM}{article}{
   author={Mazorchuk, Volodymyr},
   author={Miemietz, Vanessa},
   title={Cell 2-representations of finitary 2-categories},
   journal={Compos. Math.},
   volume={147},
   date={2011},
   number={5},
   pages={1519--1545},
}

\bib{MOS}{article}{
   author={Mazorchuk, Volodymyr},
   author={Ovsienko, Serge},
   author={Stroppel, Catharina},
   title={Quadratic duals, Koszul dual functors, and applications},
   journal={Trans. Amer. Math. Soc.},
   volume={361},
   date={2009},
   number={3},
   pages={1129--1172},
}
	
\bib{Pr}{article}{
   author={Priddy, Stewart B.},
   title={Koszul resolutions},
   journal={Trans. Amer. Math. Soc.},
   volume={152},
   date={1970},
   pages={39--60},
}	
	
\bib{Tak}{article}{
   author={Takiff, S. J.},
   title={Rings of invariant polynomials for a class of Lie algebras},
   journal={Trans. Amer. Math. Soc.},
   volume={160},
   date={1971},
   pages={249--262},
   issn={0002-9947},
}
\bib{Z}{article}{
   author={Zwicknagl, Sebastian},
   title={$R$-matrix Poisson algebras and their deformations},
   journal={Adv. Math.},
   volume={220},
   date={2009},
   number={1},
   pages={1--58},
   issn={0001-8708},
}

\bib{Z1}{article}{
   author={Zwicknagl, Sebastian},
   title={Equivariant quantizations of symmetric algebras},
   journal={J. Algebra},
   volume={322},
   date={2009},
   number={12},
   pages={4247--4282},
   issn={0021-8693},

}

\end{biblist}
\end{bibdiv}

\end{document}